\documentclass[11p]{amsart}
\usepackage{mathtools}
\usepackage{amssymb}
\usepackage{amsfonts}
\usepackage{amsthm}
\usepackage{mathrsfs}
\usepackage{polynom}
\usepackage{mathbbol}
\usepackage{latexsym,amscd}
\usepackage[all]{xy}
\usepackage{color}
\usepackage[colorlinks=true]{hyperref}
\usepackage{xcolor}
\hypersetup{citecolor=blue, urlcolor=red, colorlinks=false, linkcolor=green, citecolor=blue, filecolor=magenta}

\oddsidemargin = -0.1cm \evensidemargin = -0.1cm \textwidth =6.1in
\textheight =8.1in

\newtheorem{thm}{Theorem}
\newtheorem{prop}{Proposition}
\newtheorem{lem}{Lemma}
\numberwithin{equation}{section}
\numberwithin{prop}{section}
\numberwithin{lem}{section}
\numberwithin{thm}{section}

\numberwithin{cor}{section}

\theoremstyle{definition}

\numberwithin{defn}{section}

\usepackage{mathtools}
\usepackage{tikz}
\usetikzlibrary{chains}
\usepackage{graphics}
\usepackage{epic,eepic}

\newtheorem{rem}{Remark}
\numberwithin{rem}{section}

\def \<{\langle}
\def \>{\rangle}

\def \a{\alpha }

\def \b{\beta }

\newcommand{\bea}{\begin{eqnarray}}
\newcommand{\eea}{\end{eqnarray}}
\newcommand{\be}{\begin {equation}}
\newcommand{\ee}{\end{equation}}
\newcommand{\g}{\mathfrak{g}}

\newcommand{\h}{\mathfrak{h}}
\newcommand{\wt}{{\rm {wt} }   }

\newcommand{\hh}{\hat {\frak h} }

\newcommand{\Z}{\Bbb Z}

\newcommand{\xbet}[1]{x^{\hnu}_{\beta}\left(#1\right)}

\newcommand{\xa}[2]{x^{\hat{\nu}}_{\alpha_{#1}}\left(#2\right)}

\newcommand{\hnu}{\hat{\nu}}
\newcommand{\n}{\mathfrak{n}}

\newcommand{\ta}[1]{\tau_{\gamma_{#1},\theta_{#1}}}

\newcommand{\un}[0]{U\left(\overline{\mathfrak{n}}[\hnu]\right)}

\newcommand{\CC}[0]{\mathbb{C}}
\newcommand{\ZZ}[0]{\mathbb{Z}}
\newcommand{\QQ}[0]{\mathbb{Q}}

\newcommand\scalemath[2]{\scalebox{#1}{\mbox{\ensuremath{\displaystyle #2}}}}

\tikzset{node distance=2em, ch/.style={circle,draw,on chain,inner sep=2pt},chj/.style={ch,join},every path/.style={shorten >=4pt,shorten <=4pt},line width=1pt,baseline=-1ex}

\let\dlabel=\alabel

\newcommand{\dnode}[2][chj]{%
\node[#1,label={below:\dlabel{#2}}] {};
}

\newcommand{\dnodebr}[1]{%
\node[chj,label={below right:\dlabel{#1}}] {};
}

\newcommand{\dydots}{%
\node[chj,draw=none,inner sep=1pt] {\dots};
}

\begin{document}
 
\title[]{Combinatorial bases of principal subspaces of modules for twisted affine Lie algebras of type $A_{2l-1}^{(2)}, D_l^{(2)}, E_6^{(2)}$ and $D_4^{(3)}$ }  
 
\author{Marijana Butorac and Christopher Sadowski}

\keywords{twisted affine Lie algebras, vertex operator algebras, principal subspaces, twisted quasi-particle bases}

\subjclass[2000]{Primary 17B67; Secondary 17B69, 05A19}

\begin{abstract} 
We construct combinatorial bases of principal subspaces of standard modules of level $k \geq 1$ with highest weight $k\Lambda_0$ for the twisted affine Lie algebras of type $A_{2l-1}^{(2)}$, $D_l^{(2)}$, $E_6^{(2)}$ and $D_4^{(3)}$. Using these bases we directly calculate characters of principal subspaces.
\end{abstract}

\maketitle
\section{Introduction}
The study of principal subspaces of standard modules for untwisted affine Lie algebras was initiated in \cite{FS1}--\cite{FS2} by Feigin and Stoyanovsky and was later extended by Georgiev in \cite{G}. Motivated by the work of J. Lepowsky and M. Primc in \cite{LP} and extending the earlier work of Feigin and Stoyanovsky, Georgiev constructed bases of principal subspaces of certain standard modules of the affine Lie algebra of type $A_l^{(1)}$.  These bases were described by using  certain coefficients of vertex operators, which are called quasi-particles. From quasi-particle bases, they directly obtained the characters (i.e. multigraded dimensions) of principal subspaces. The work of Feigin and Stoyanovsky and Georgiev has since been extended in many ways by other authors (cf. \cite{AKS}, \cite{Ba}, \cite{BPT}, \cite{Bu1}--\cite{Bu3}, \cite{FFJMM},  \cite{J1}--\cite{J2}, \cite{JP}, \cite{Kan}, \cite{Ka1}--\cite{Ka2}, \cite{Ko1}--\cite{Ko3}, \cite{MPe}, \cite{P}, \cite{T1}--\cite{T3}, and many others). 

The study of principal subspaces of basic modules for twisted affine Lie algebras was initiated in \cite{CalLM4}, where a general setting was given and the principal subspace of the basic $A_2^{(2)}$-module was studied. This work was later extended in \cite{CalMPe} and \cite {PS1}--\cite{PS2} to study the principal subspaces of the basic modules for all the twisted affine Lie algebras, and in \cite{PSW} to a certain lattice setting. In each of these works the authors, using certain ideas from the untwisted affine Lie algebra setting found in \cite{CLM1}--\cite{CLM2} and \cite{CalLM1}--\cite{CalLM3} (see also \cite{Cal1}--\cite{Cal2}, \cite{S1}--\cite{S2}), showed that the principal subspaces under consideration had certain presentations (i.e. could be defined in terms of certain generators and relations). Using these presentations, the authors constructed exact sequences among the principal subspaces and in this way obtained recursions satisfied by the characters of principal subspaces. Solving these recursions yields the characters of the principal subspaces of the basic modules for the twisted affine Lie algebras in each work.

The aim of this work is to determine the characters of the principal subspaces of the level $k \geq 1$ standard module $L^{\hnu}(k\Lambda_0)$, which we denote by $W^T_{L_k}$, for the twisted affine Lie algebras of type $A_{2l-1}^{(2)}$, for $l \geq 2$, $D_l^{(2)}$, for $l \geq 4$, $E_6^{(2)}$ and $D_4^{(3)}$, extending certain results found in \cite{PS1}--\cite{PS2}. The approach we use, however, is different than the approach found in \cite{PS1}--\cite{PS2}. By using vertex operator techniques we construct combinatorial bases of principal subspaces which are twisted analogues to those found in \cite{G}, and from which we obtain the characters of principal subspaces. In our proofs, we use level $k$ analogues of certain maps originally developed and used in \cite{CalLM4}, \cite{CalMPe}, and \cite{PS1}--\cite{PS2} analogously to how they were used in \cite{G}. We note importantly that, in the untwisted setting, certain modes of intertwining operators play an important role in the proofs of linear independence of these bases. In our twisted affine Lie algebra setting, we instead use other maps developed for the twisted setting in \cite{CalLM4}.

More specifically, in this paper, following \cite{G} and using certain results in \cite{Li}, we construct bases using the coefficients
\begin{equation*}
x^{\hnu}_{r\alpha_i}(m)=\text{Res}_z \{z^{m+r-1} x^{\hnu}_{r\alpha_i}(z)\},
\end{equation*}
of the twisted vertex operators 
\begin{equation*} 
x^{\hnu}_{r\alpha_i}(z)=Y^{\hnu}(x_{\alpha_i}(-1)^r {\bf 1} ,  z),
\end{equation*}
which we, in complete analogy with the untwisted case, call twisted quasi-particles of color $i$, charge $r$ and energy $-m$.  Similar to the untwisted case, (see \cite{Bu1}--\cite{Bu3}, \cite{JP}), first we prove certain relations for twisted quasi-particles of the form $x^{\hnu}_{r\alpha_i}(m)x^{\hnu}_{r'\alpha_i}(m')$ for $r\leq r'$ and $x^{\hnu}_{r\alpha_i}(m)x^{\hnu}_{r'\alpha_j}(m')$, where $1 \leq r, r' \leq k$, which we call relations among twisted quasi-particles. With these relations, along with the relations $x^{\hnu}_{(k+1)\alpha_i}(z)=0$, we build twisted quasi-particle spanning sets of the principal subspaces $W^T_{L_k}$. The resulting bases are analogous to the quasi-particle bases of principal subspaces in the case of untwisted affine Lie algebras of type $ADE$ in the sense that energies of twisted quasi-particles in the twisted quasi-particle spanning sets satisfy similar difference conditions, which are generalizations of difference two conditions found in \cite{FS1}--\cite{FS2} and \cite{G}.
As in the untwisted case found in \cite{G}, in the proof of linear independence of our spanning sets we consider the principal subspace as a subspace of tensor product of $k$ principal subspaces of basic modules. This enables us to use the above-mentioned maps obtained from the construction of level one twisted modules for lattice vertex operator algebras from \cite{CalLM4} and \cite {PS1}--\cite{PS2}.  Finally, we note that in this paper we do not consider the case of the principal subspaces $W^T_{L_k}$ for the twisted affine Lie algebra of type $A_{2l}^{(2)}$, which will be considered in future work. 

Our main result in this work is as follows: denote by $\mathrm{ch} \  W^T_{L_k}$ the character of the principal subspace $W^T_{L_k}$. The characters of the principal subspaces are:
\begin{thm} We have for $A_{2l-1}$:
\begin{align}\nonumber 
&\mathrm{ch} \  W^T_{L_k}&\\
\nonumber
= &\sum_{\substack{r_{1}^{(1)}\geq \cdots\geq r_{1}^{(k)}\geq 0\\ \cdots \\r_{l}^{(1)}\geq \cdots\geq r_{l}^{(k)}\geq 
0}}
\frac{q^{\frac{1}{2}\sum_{i=1}^{l-1}\sum_{s=1}^{k}r_i^{(s)^2}+\sum_{s=1}^{k}r_l^{(s)^2}-\frac{1}{2}\sum_{i=2}^{l-1}\sum_{s=1}^{k}r_{i-1}^{(s)}r_{i}^{(s)}-\sum_{s=1}^{k}r_{l-1}^{(s)}r_{l}^{(s)}}}
{\prod_{i=1}^{l-1}\left((q^{\frac{1}{2}};q^{\frac{1}{2}})_{r^{(1)}_{i}-r^{(2)}_{i}}\cdots (q^{\frac{1}{2}};q^{\frac{1}{2}})_{r^{(k)}_{i}}\right) (q)_{r^{(1)}_{l}-r^{(2)}_{l}}\cdots (q)_{r^{(k)}_{l}}}y^{r^{(1)}_1+\cdots +r^{(k)}_1}_{1} \cdots y^{r^{(1)}_l+\cdots +r^{(k)}_l}_{l}&
\end{align}
for $D_l$ when $v=2$:
 \begin{align}\nonumber
&\mathrm{ch} \  W^T_{L_k}&\\
\nonumber
= &\sum_{\substack{r_{1}^{(1)}\geq \cdots\geq r_{1}^{(k)}\geq 0\\ \cdots \\r_{l-1}^{(1)}\geq \cdots\geq r_{l-1}^{(k)}\geq 
0}}
\frac{q^{\sum_{i=1}^{l-2}\sum_{s=1}^{k}r_i^{(s)^2}+\frac{1}{2}\sum_{s=1}^{k}r_{l-1}^{(s)^2}-\sum_{i=2}^{l-1}\sum_{s=1}^{k}r_{i-1}^{(s)}r_{i}^{(s)}}}
{\prod_{i=1}^{l-2}\left((q)_{r^{(1)}_{i}-r^{(2)}_{i}}\cdots  (q)_{r^{(k)}_{i}}\right)(q^{\frac{1}{2}};q^{\frac{1}{2}})_{r^{(1)}_{l-1}-r^{(2)}_{l-1}}\cdots (q^{\frac{1}{2}};q^{\frac{1}{2}})_{r^{(k)}_{l-1}}}y^{r^{(1)}_1+\cdots +r^{(k)}_1}_{1} \cdots y^{r^{(1)}_{l-1}+\cdots +r^{(k)}_{l-1}}_{l-1}&
\end{align}
 for $E_6$:
 \begin{align}\nonumber
&\mathrm{ch} \  W^T_{L_k}&\\
\nonumber
= &\sum_{\substack{r_{1}^{(1)}\geq \cdots\geq r_{1}^{(k)}\geq 0\\\substack{ \ldots \\ r_{4}^{(1)}\geq \cdots\geq r_{4}^{(k)}\geq 
0}}}
\frac{q^{\frac{1}{2}\sum_{i=1}^{2}\sum_{s=1}^{k}r_i^{(s)^2}+\sum_{i=3}^{4}\sum_{s=1}^{k}r_i^{(s)^2}-\sum_{s=1}^{k}(r_{1}^{(s)}r_{2}^{(s)}+r_{2}^{(s)}r_{3}^{(s)}+r_{3}^{(s)}r_{4}^{(s)})}}
{\prod_{i=1,2}\left((q^{\frac{1}{2}};q^{\frac{1}{2}})_{r^{(1)}_{i}-r^{(2)}_{i}}\cdots (q^{\frac{1}{2}};q^{\frac{1}{2}})_{r^{(k)}_{i}}\right)\left(\prod_{i=3,4} (q)_{r^{(1)}_{i}-r^{(2)}_{i}}\cdots (q)_{r^{(k)}_{i}}\right)}y^{r^{(1)}_1+\cdots +r^{(k)}_1}_{1} \cdots y^{r^{(1)}_4+\cdots +r^{(k)}_4}_{4}&
\end{align}
and for $D_4$ when $v=3$:
\begin{align}\nonumber 
&\mathrm{ch} \  W^T_{L_k}&\\
\nonumber
= &\sum_{\substack{r_{1}^{(1)}\geq \cdots\geq r_{1}^{(k)}\geq 0\\\substack{  r_{2}^{(1)}\geq \cdots\geq r_{2}^{(k)}\geq 
0}}}
\frac{q^{\frac{1}{3}\sum_{s=1}^{k}r_1^{(s)^2}+ \sum_{s=1}^{k}r_2^{(s)^2}-\sum_{s=1}^{k}r_{1}^{(s)}r_{2}^{(s)}}}
{(q^{\frac{1}{3}};q^{\frac{1}{3}})_{r^{(1)}_{1}-r^{(2)}_{1}}\cdots (q^{\frac{1}{3}};q^{\frac{1}{3}})_{r^{(k)}_{1}}  (q)_{r^{(1)}_{2}-r^{(2)}_{2}}\cdots (q)_{r^{(k)}_{2}}}y^{r^{(1)}_1+\cdots +r^{(k)}_1}_{1}  y^{r^{(1)}_2+\cdots +r^{(k)}_2}_{2}.&
\end{align}
\end{thm}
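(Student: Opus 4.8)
The plan is to prove each of the four character identities by exhibiting an explicit monomial basis of $W^T_{L_k}$ built from the twisted quasi-particles $x^{\hnu}_{r\alpha_i}(m)$ acting on the highest weight vector $\vL$, and then reading off the character as a sum of $q^{\mathrm{energy}}y^{\mathrm{weight}}$ over this basis. I would fix one of the four diagram automorphisms $\hnu$ (of order $v=2$ or $v=3$) and argue uniformly, treating the four root systems by keeping track of the normalizations $\langle\alpha_i,\alpha_i\rangle$ that separate the fractionally-moded colors from the integrally-moded ones. The argument splits into three stages: first build a spanning set indexed by admissible arrays of charges and energies; then prove these spanning monomials are linearly independent; and finally organize the resulting basis and extract the $q$- and $y$-gradings.

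For the spanning set, I would begin from the fact that $W^T_{L_k}$ is generated from $\vL$ by the quasi-particles, and then impose the two families of constraints established earlier: the relations among twisted quasi-particles of the forms $x^{\hnu}_{r\alpha_i}(m)x^{\hnu}_{r'\alpha_i}(m')$ and $x^{\hnu}_{r\alpha_i}(m)x^{\hnu}_{r'\alpha_j}(m')$, together with the vanishing $x^{\hnu}_{(k+1)\alpha_i}(z)=0$. A Poincar\'e--Birkhoff--Witt-style reduction then pushes an arbitrary monomial into one in which, for each color $i$, the charges are at most $k$ and weakly ordered, equal-color equal-charge factors obey difference-two conditions, and equal-color factors of differing charge obey the appropriate shifted inequalities. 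I would record each admissible monomial by the conjugate description of its charge-type, namely the integers $r_i^{(s)}$ counting quasi-particles of color $i$ of charge at least $s$, so that $r_i^{(1)}\geq\cdots\geq r_i^{(k)}\geq 0$ and $r_i^{(s)}-r_i^{(s+1)}$ is the number of charge-$s$ quasi-particles of color $i$.

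The main obstacle is linear independence, and here I would follow \cite{G} but replace his intertwining-operator modes by the maps available in the twisted setting. The first move is to realize $W^T_{L_k}$ inside the tensor product of $k$ principal subspaces of basic (level-one) modules, under which a charge-$r$ quasi-particle decomposes as a sum of tensor products of lower-charge quasi-particles. To separate monomials I would apply the level-one maps arising from the construction of twisted modules in \cite{CalLM4} and \cite{PS1}--\cite{PS2}, the twisted analogues of the operators $\Delta^T(\omega_i,-x)$ and of the shift maps $\tau_{\gamma,\theta}$, $\psi_{\gamma,\theta}$, in order to strip off the quasi-particles of maximal charge and color and to extract the leading coefficient. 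Arguing by induction on total charge and on a fixed monomial order, these maps should carry a hypothetical nontrivial relation among top monomials to a nontrivial relation in a strictly smaller principal subspace, a contradiction. Getting these projections and their interaction with the quasi-particles exactly right in the level-$k$ twisted setting, in particular controlling the fractional-power contributions produced by $\hnu$, is the delicate point on which everything rests.

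Once the basis is secured, the character is bookkeeping. For a fixed charge-type $(r_i^{(s)})$ the total $y_i$-weight is the total charge $\sum_s r_i^{(s)}$, producing the monomial $y_1^{r_1^{(1)}+\cdots+r_1^{(k)}}\cdots$ in each formula. The minimal energy of that charge-type is a quadratic form in the $r_i^{(s)}$ dictated by the bilinear form, normalized by the twisting: the diagonal terms give the coefficients $\tfrac12$, $1$, $\tfrac13$ according to whether the color is fractionally moded (coefficient $\tfrac1v$) or integrally moded (coefficient $1$), while the off-diagonal pairings $\langle\alpha_{i-1},\alpha_i\rangle$ give the cross terms such as $-r_{i-1}^{(s)}r_i^{(s)}$, reproducing the exponents of $q$ in the numerators. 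Finally, summing $q^{\mathrm{energy}}$ over all admissible energy arrays above the minimum factors, by the difference-two conditions, into independent contributions from the charge-$s$, color-$i$ blocks; a block of size $r_i^{(s)}-r_i^{(s+1)}$ yields a reciprocal Pochhammer factor whose base is the module grading unit, namely $q^{1/2}$ or $q^{1/3}$ for the fractionally-moded colors and $q$ otherwise, via the standard generating function for partitions into that many parts. Assembling these pieces over all charge-types gives the four stated identities.
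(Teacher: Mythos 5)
Your proposal is correct and follows essentially the same route as the paper: a quasi-particle spanning set obtained from the relations $x^{\hnu}_{(k+1)\alpha_i}(z)=0$ and the same-color/different-color quasi-particle lemmas, linear independence via the projection $\pi_{\mathcal{R}}$ onto a tensor product of $k$ level-one principal subspaces combined with the twisted level-one maps and induction on the monomial ordering, and finally the character read off by rewriting the energy bounds in dual-charge-type variables $r_i^{(s)}$. The only cosmetic difference is that you invoke the shift maps $\psi_{\gamma,\theta}$ from the presentation-based papers, whereas this paper strips off quasi-particles using the operators $\Delta_c^T(\lambda_i,-z)^d$ together with $e_{\alpha_i}$ --- the same toolkit serving the same purpose.
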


\section{Preliminaries}
In this section, we very closely follow the setting developed in \cite{PS1}--\cite{PS2} (cf. also \cite{L1} and \cite{CalLM4}), and recall many details from these works.

Let $\mathfrak{g}$ be a finite dimensional simple Lie algebra of type $A_{2l-1}, D_l,$ or $E_6$, with root lattice 
\begin{equation*}
L = \mathbb{Z} \alpha_1 \oplus \dots \oplus \mathbb{Z} \alpha_D,
\end{equation*}
where $D$ is the rank of $\mathfrak{g}$, with its standard nondegenerate symmetric bilinear form $\langle \cdot, \cdot \rangle$. Also, let
\begin{equation*}
\mathfrak{h} = L \otimes_\mathbb{Z} \mathbb{C}.
\end{equation*}
We take the following labelings of the Dynkin diagrams of our Lie algebras:\\
{\bf Type $A_{2l-1}$:}
\begin{center}\begin{tikzpicture}[start chain]
\dnode{1}
\dnode{2}
\dydots
\dnode{l-1}
\dnode{l}
\dnode{l+1}
\dydots
\dnode{2l-2}
\dnode{2l-1}
\end{tikzpicture}\end{center}
{\bf Type $D_l$:}
\begin{center}
\begin{tikzpicture}
\begin{scope}[start chain]
\dnode{1}
\dnode{2}
\node[chj,draw=none] {\dots};
\dnode{l-2}
\dnode{l-1}
\end{scope}
\begin{scope}[start chain=br going above]
\chainin(chain-4);
\dnodebr{l}
\end{scope}
\end{tikzpicture}\end{center}
{\bf Type $E_6$:}
\begin{center}\begin{tikzpicture}
\begin{scope}[start chain]
\dnode{1}
\dnode{2}
\dnode{3}
\dnode{5}
\dnode{6}
\end{scope}
\begin{scope}[start chain=br going above]
\chainin (chain-3);
\dnodebr{4}
\end{scope}
\end{tikzpicture}\end{center}
In the case of $D_4$, we use the labeling:
\begin{center}
\begin{tikzpicture}
\begin{scope}[start chain]
\dnode{1}
\dnode{2}
\dnode{3}
\end{scope}
\begin{scope}[start chain=br going above]
\chainin(chain-2);
\dnodebr{4}
\end{scope}
\end{tikzpicture}\end{center}
\begin{rem}
We note here that in \cite{PS2}, the labeling used for $E_6$ was:
\begin{center}\begin{tikzpicture}
\begin{scope}[start chain]
\foreach \dyni in {1,...,5} {
\dnode{\dyni}
}
\end{scope}
\begin{scope}[start chain=br going above]
\chainin (chain-3);
\dnodebr{6}
\end{scope}
\end{tikzpicture}\end{center}
and in \cite{PS1}, the labeling used for $D_4$ was:
\begin{center}
\begin{tikzpicture}
\begin{scope}[start chain]
\dnode{2}
\dnode{1}
\dnode{3}
\end{scope}
\begin{scope}[start chain=br going above]
\chainin(chain-2);
\dnodebr{4}
\end{scope}
\end{tikzpicture}\end{center}
and we change the labeling in this work for notational simplicity. Later in the work, we will see that the roles of operators corresponding to $\alpha_4$ and $\alpha_6$  in the case of $E_6$ and the roles of operators corresponding to $\alpha_1$ and $\alpha_2$  in the case of $D_4$ will be swapped compared to their counterparts found in  \cite{PS2} and \cite{PS1}.
\end{rem}
\subsection{Dynkin diagram automorphisms}
Let $\nu$ be a Dynkin diagram automorphism of $\mathfrak{g}$ of order $v$, extended to all of $\mathfrak{h}$. In the case that $v=2$, we let $\eta=-1$ be a primitive second root of unity and set $\eta_0 = \eta$, and in the case that $v=3$ we let $\eta$ be a cube root of unity and set $\eta_0 = -\eta$. Following \cite{CalLM4} and \cite{PS1}--\cite{PS2}, we consider two central extensions of $L$ by the group $\langle\eta_0\rangle$, denoted by $\hat{L}$ and $\hat{L}_\nu$, with commutator maps $C_0$ and $C$ and associated normalized $2$-cocycles $\epsilon_{C_0}$ and $\epsilon_{C}$, respectively:
\begin{equation*}
1\longrightarrow\left<\eta_0\right>\longrightarrow\hat{L}\overset{\overline{~~}}{\longrightarrow} L\longrightarrow 1
\end{equation*}
and
\begin{equation*}
1\longrightarrow\left<\eta_0\right>\longrightarrow\hat{L}_\nu\overset{\overline{~~}}{\longrightarrow} L\longrightarrow 1
\end{equation*}
 We define commutator maps $C_0$ and $C$ by
\begin{equation*}\begin{aligned}
C_0:L\times L&\to\CC^{\times}\\
(\alpha,\b)&\mapsto(-1)^{\left<\a,\b\right>}
\end{aligned}\end{equation*}
and 
\begin{equation*}
C(\a,\b)=\prod_{j=0}^{v-1}(-\eta^j)^{\left<\nu^j\a,\b\right>}.
\end{equation*}
Following \cite{L1} and \cite{CalLM4}, we let 
\begin{align*}
e:L&\to\hat{L}\\
\a&\mapsto e_{\a}\end{align*}
be a normalized section of $\hat{L}$ so that 
\begin{equation*}e_0=1
\end{equation*}
and
\begin{equation*}
\overline{e_{\a}}=\a~\text{ for all }~\a\in L,\end{equation*}
 satisfying
\begin{equation*}
 e_{\a}e_{\b}=\epsilon_{C_0}(\a,\b)e_{\a+\b}~ \text{ for all }~\a,\b\in L.\end{equation*}
We choose our $2$-cocycle to be 
\begin{equation*}
\epsilon_{C_0}(\a_i,\a_j) = \left\{
     \begin{array}{lr}
       1 & \text{ if } i\leq j \\
       (-1)^{\left<\a_i,\a_j\right>} & \text{ if } i>j
     \end{array}
   \right.
\end{equation*}
 The $2$-cocycles $\epsilon_C$ and $\epsilon_{C_0}$ are related by (see Equation 2.21 of \cite{CalLM4})
\begin{equation*}
\epsilon_{C_0}(\a,\b)=\prod_{-\frac{k}{2}<j<0}\left(-\eta^{-j}\right)^{\left<\nu^{-j}\a,\b\right>}\epsilon_{C}(\a,\b).
\end{equation*}

We now lift the isometry $\nu$ of $L$ to an automorphism $\hnu$ of $\hat{L}$ such that 
   \begin{equation*}
   \overline{\hnu a}=\nu\overline{a} \hspace{0.4in} \text{ for } \hspace{0.4in} a\in\hat{L}.
   \end{equation*}
and choose $\hnu$ so that 
   \begin{equation*}
   \hnu a=a \hspace{0.4in} \text{ if } \hspace{0.4in} \nu\overline{a}=\overline{a},
   \end{equation*}
   and thus $\hnu^2=1$ if $\nu$ has order $2$ and $\hnu^3 = 1$ if $\nu$ has order $3$.  Indeed, set 
   \begin{equation*}
   \hnu e_{\a}=\psi(\a)e_{\nu\a} \end{equation*}
   where $\psi:L\to \left<\eta\right>$ is defined by
   \begin{equation*}
\psi(\alpha)=\begin{cases} 
      \epsilon_{C_0}(\a,\a) & \text{if $L$ is type $A_{2l-1}$} \\
      1 & \text{if $L$ is type $D_{l}$ and the order of $\nu$ is $2$} \\
      (-1)^{r_3r_4}\epsilon_{C_0}(\a,\a) & \text{if $L$ is type $E_6$ and $\a=\sum_{i=1}^6r_i\a_i$}\\
      (-1)^{r_2r_3}\epsilon_{C_0}(\a,\a) & \text{ if $L$ is of type $D_4$,  $\a=\sum_{i=1}^4r_i\a_i$ and the order of $\nu$ is $3$}.
   \end{cases}
\end{equation*}
From \cite{PS1}--\cite{PS2}, we have that:
\begin{equation*}
\epsilon_{C_0}(\nu\a,\nu\beta)=\begin{cases}
\epsilon_{C_0}(\beta,\a)& \text{if $L$ is type $A_{2l-1}$} \\
      \epsilon_{C_0}(\a,\beta) & \text{if $L$ is type $D_{l}$} \\
      (-1)^{r_4s_3+r_3s_4}\epsilon_{C_0}(\beta,\a) & \text{if $L$ is type $E_6$, $\a=\sum_{i=1}^6r_i\a_i$ and $\b=\sum_{i=1}^6s_i\a_i$}\\
       (-1)^{r_2s_3+r_3s_2}\epsilon_{C_0}(\beta,\a) & \text{if $L$ is of type $D_4$, $\a=\sum_{i=1}^3r_i\a_i$ and $\b=\sum_{i=1}^3s_i\a_i$. } 
   \end{cases}
\end{equation*}
As in \cite{PS1}--\cite{PS2}, we have that 
\begin{equation*}
 \hnu(e_{\alpha_i}) = e_{\nu \alpha_i}\\
\end{equation*}
for each simple root $\alpha_i$.

\subsection{The lattice vertex operator $V_L$ and its twisted module $V_L^T$}
We assume that the reader is familiar with the construction of the lattice vertex operator algebra $V_L$ (cf. \cite{FLM2} and \cite{LL}), and recall some important details of this construction. In particular, we follow Section 2 of \cite{CalLM4}.

We view $\mathfrak{h}$ as an abelian Lie algebra, and let
\begin{equation*}
\hat{\h}=\h\otimes \CC[t,t^{-1}] \oplus \mathbb{C}c
\end{equation*}
with the usual bracket, and let
\begin{equation*}
\hat{\h}^{-}=\h\otimes t^{-1}\CC[t^{-1}].
\end{equation*}
We have that
\begin{equation*}
V_L \cong S(\hat{\h}^{-} ) \otimes \mathbb{C}[L]
\end{equation*}
linearly. We extend $\hnu$ to an automorphism of $V_L$, which we also call $\hnu$, by $\hnu = \nu \otimes \hnu$.

Let
\begin{equation*}
\mathfrak{h}_{(m)} = \{ x \in \mathfrak{h} \  | \ \nu(x) = \eta^m x \}.
\end{equation*}
We have that
\begin{equation*}
\mathfrak{h} = \coprod_{m \in \mathbb{Z}/v\mathbb{Z}} \mathfrak{h}_{(m)}.
\end{equation*}
Let $P_0$ be the projection of $\mathfrak{h}$ onto $\mathfrak{h}_{(0)}$.
We form the twisted affine Lie algebra
\begin{equation*}
\hat{\mathfrak{h}}[\nu] = \coprod_{m \in \mathbb{Z}} \mathfrak{h}_{(m)} \otimes t^{m/v}
\end{equation*}
where
\begin{equation*}
[\alpha \otimes t^m, \beta \otimes t^n] = \langle \alpha, \beta \rangle m\delta_{m+n,0}c
\end{equation*}
for $m,n \in \frac{1}{v} \mathbb{Z}$ and $\alpha \in \mathfrak{h}_{(4m)}$ and $\beta \in \mathfrak{h}_{(4n)}$
and $c$ is central. The Lie algebra $\hat{\mathfrak{h}}[\nu]$ is $\frac{1}{v}\mathbb{Z}$-graded by weights:
\begin{equation*}
\text{wt}(\alpha \otimes t^m) = -m \ \ \text{and} \ \ \text{wt}(c) = 0.
\end{equation*}
Define the Heisenberg subalgebra
\begin{equation*}
\hh[\nu]_{\frac{1}{v}\ZZ}=\prod_{\substack{m\in\frac{1}{v}\ZZ \\ m\neq 0}}\h_{(km)}\otimes t^m\oplus\CC c
\end{equation*}
of $\hh[\nu]$,
the subalgebras
\begin{equation*}\hh[\nu]^{\pm}=\prod_{\substack{m\in\frac{1}{v}\ZZ \\ \pm m>0}}\h_{(vm)}\otimes t^m
\end{equation*}
of $\hh[\nu]_{\frac{1}{v}\ZZ}$,
and the induced module
\begin{equation*}
S[\nu]=U\left(\hh[\nu]\right)\otimes_{\prod_{m\geq 0}\h_{(vm)}\otimes t^m\oplus\CC c}\CC\cong S\left(\hh[\nu]^{-}\right),
\end{equation*}
which is $\QQ$-graded such that 
\begin{equation*}
\text{wt}(1)=\frac{1}{4v^2}\sum_{j=1}^{v-1}j(v-j)\text{dim}\mathfrak{h}_{(j)}.\end{equation*}

Following \cite{L1} and \cite{CalLM4}, we set
\begin{equation*}
N = (1-P_0)\mathfrak{h} \cap L.
\end{equation*}
In the case that $v=2$, we have that
\begin{equation*}
N = \coprod_{i=1}^D \mathbb{Z} (\alpha_i - \nu \alpha_i)
\end{equation*}
and when $v=3$ we have that:
\begin{equation*}
N = \{ r_1 \a_1 +r_3 \a_3 + r_4 \a_4 \in L \  |  \  r_1 + r_3 + r_4 = 0 \}.
\end{equation*}

 Using Proposition 6.2 of \cite{L1}, let $\CC_{\tau}$ denote the one dimensional $\hat{N}$-module $\CC$ with character $\tau$ and write
\begin{equation*}
 T=\CC_{\tau}.\end{equation*} Consider the induced $\hat{L}_{\nu}$-module 
\begin{equation*}
U_{T}=\CC[\hat{L}_{\nu}]\otimes_{\CC[\hat{N}]}T\cong \CC[L/N],\end{equation*}
which is graded by weights and on which $\hat{L}_{\nu}$, $\h_{(0)}$, and $z^h$ for $h\in\h_{(0)}$ all naturally act. Set
\begin{equation*}
V_{L}^{T}=S[\nu]\otimes U_T\cong S\left(\hat{\h}[\nu]^{-}\right)\otimes \CC[L/N],\end{equation*}
which is naturally acted upon by $\hat{L}_{\nu}$, $\hat{\h}_{\frac{1}{v}\ZZ}$, $\h_{(0)}$, and $z^h$ for $h\in\h$.

For each $\a\in\h$ and $m\in\frac{1}{v}\ZZ$ define the operators on $V_L^T$
\begin{equation*}
\a_{(vm)}\otimes t^m\mapsto \a^{\hnu}(m)\end{equation*}
and set
\begin{equation*}
\a^{\hnu}(z)=\sum_{m\in\frac{1}{v}\ZZ}\a^{\hnu}(m)z^{-m-1}.\end{equation*}
Of most importance will be the $\hnu$-twisted vertex operators acting on $V_L^T$ for each $e_{\a}\in\hat{L}$
\begin{equation*}
Y^{\hnu}(\iota(e_{\a}),z)=v^{-\frac{\left<\a,\a\right>}{2}}\sigma(\alpha)E^{-}(-\a,z)E^{+}(-\a,z)e_{\alpha}z^{\alpha_{(0)}+\frac{\left<\alpha_{(0)},\alpha_{(0)}\right>}{2}-\frac{\left<\alpha,\alpha\right>}{2}},\end{equation*}
as defined in \cite{L1}, where
\begin{equation*}
E^{\pm}(-\alpha,z)=\text{exp}\left(\sum_{m\in\pm\frac{1}{v}\mathbb{Z}_{+}}\frac{-\alpha_{(vm)}(m)}{m}z^{-m}\right),
\end{equation*}
and
\begin{equation*}
\sigma(\alpha) = 1\text{ when } v = 2
\end{equation*}
and
\begin{equation*}
\sigma(\alpha) = (1-\eta^2)^{\langle \nu \alpha, \alpha \rangle} \text{ when } v = 3
\end{equation*}
for $\a\in\h$. For $m\in\frac{1}{v}$ and $\a\in L$ define the component operators $\xa{}{m}$ by 
\begin{equation*}\label{VertexOperators}
Y^{\hnu}(\iota(e_{\alpha}),z)=\sum_{m\in\frac{1}{v}\mathbb{Z}}\xa{}{m}z^{-m-\frac{\left<\alpha,\alpha\right>}{2}}=\xa{}{z}.
\end{equation*}
We note here that $V_L^T$ is a $\hnu$-twisted module for $V_L$, and in particular it satisfies the twisted Jacobi identity:
\begin{multline} \label{Jacobi}
x^{-1}_0\delta\left(\frac{x_1-x_2}{x_0}\right)
Y^{\hat{\nu}}(u,x_1)Y^{\hat{\nu}}(v,x_2)-x^{-1}_0
\delta\left(\frac{x_2-x_1}{-x_0}\right) Y^{\hat{\nu}}
(v,x_2)Y^{\hat{\nu}} (u,x_1)\\  
= x_2^{-1}\frac{1}{v}\sum_{j\in \Z /v \Z}
\delta\left(\eta^j\frac{(x_1-x_0)^{1/v}}{x_2^{1/v}}\right)Y^{\hat{\nu}} 
(Y(\hat{\nu}^j
u,x_0)v,x_2) 
\end{multline}
for $u, v \in V_L$.

\subsection{Twisted affine Lie algebras}
We now construct the twisted affine Lie algebras of type $A_{2l-1}^{(2)}, D_l^{(2)}, E_6^{(2)},$ and $D_4^{(3)}$, and give them an action on $V_L^T$. Define the vector space
\begin{equation*}
\g=\h\oplus\coprod_{\a\in\Delta}\mathbb{C} x_{\a},
\end{equation*}
where $\{x_{\a}\}$ is a set of symbols, and $\Delta$ is the set of roots corresponding to $L$. 

We give the vector space $\g$ the structure of a Lie algebra via the bracket defined 
\begin{equation*}
[h,x_{\a}]=\left<h,\a\right>x_{\a},~~[\h,\h]=0\end{equation*} 
where $h\in\h$ and $\a\in\Delta$ and
\begin{equation*}
   [x_{\alpha},x_{\beta}] = \left\{
     \begin{array}{lr}
       \epsilon_{C_0}(\alpha,-\alpha)\alpha & \text{ if }  \alpha+\beta=0\\
        \epsilon_{C_0}(\alpha,\beta)x_{\alpha+\beta} & \text{ if }  \alpha+\beta\in\Delta\\
        0   & \text{ otherwise}.
     \end{array}
   \right.
\end{equation*}  
We note that $\g$ is a Lie algebra isomorphic to one of type $A_{2l-1}$, $D_l$, or $E_6$ depending on the choice of $L$ (cf. \cite{FLM3}). We also extend the bilinear form $\left<\cdot,\cdot\right>$ to $\g$ by
\begin{equation*} \left<h,x_{\a}\right>=\left<x_{\a},h\right>=0\end{equation*} 
and
\begin{equation*}
\left<x_{\alpha},x_{\beta}\right>=\left\{\begin{array}{lrr}
\epsilon_{C_0}(\alpha,-\alpha)     &\text{if}~~~~\alpha+\beta=&0 \\
0     &\text{if}~~~~~\alpha+\beta\neq&0
\end{array}\right.\end{equation*}

Following \cite{L1}, \cite{CalLM4}, and \cite{PS1}--\cite{PS2}, we use our extension of $\nu:L\to L$ to $\hnu:\hat{L}\to\hat{L}$ to lift the automorphism $\nu:\h\to\h$ to an automorphism $\hnu:\g\to\g$ by setting
\begin{equation*}
\hnu x_{\a}=\psi (\alpha) x_{\nu\a}
\end{equation*} for all $\a\in\Delta$. Here, we are using our particular choices of $\hnu$ (extended to $\mathbb{C}\{L \}$) and section $e$.

For $m\in\mathbb{Z}$ set
\begin{equation*}
\g_{(m)}=\{x\in\g \  | \ \hnu(x)=\eta^m x\}.
\end{equation*}
Form the $\hnu$-twisted affine Lie algebra associated to $\g$ and $\hnu$:
\begin{equation*} 
\hat{\g}[\hnu]=\coprod_{m\in\frac{1}{v}\mathbb{Z}}\g_{(vm)}\otimes t^{m}\oplus\mathbb{C}c
\end{equation*}
with
\begin{equation*}
[x\otimes t^m,y\otimes t^n]=[x,y]\otimes t^{m+n}+\left<x,y\right>m\delta_{m+n,0}c
\end{equation*}
and
\begin{equation*}
[c,\hat{\g}[\hnu]]=0,
\end{equation*}
for $m,n\in\frac{1}{v}\mathbb{Z}$, $x\in\g_{(vm)}$, and $y\in\g_{(vn)}$.
Adjoining the degree operator $d$ to  $\hat{\g}[\hnu]$, we define
\begin{equation*}
\tilde{\g}[\hnu]=\hat{\g}[\hnu]\oplus\mathbb{C}d,
\end{equation*}
where
\begin{equation*}
[d,x\otimes t^n]=n x\otimes t^n,
\end{equation*}
for $x\in\g_{(vn)}$, $n\in\frac{1}{v}\mathbb{Z}$ and $[d,c]=0$. The Lie algebra $\tilde{g}[\hnu]$ is isomorphic to $A_{2l-1}^{(2)}$, $D_l^{(2)}$, $E_6^{(2)}$, or $D_4^{(3)}$ depending on the choice of $L$ and $\nu$, and is $\frac{1}{v}\mathbb{Z}$-graded.
We give $V_L^T$ the structure of a $\hat{\g}[\hnu]$-module by:
\begin{thm}(Theorem 3.1 \cite{CalLM4}, Theorem 9.1 \cite{L1}, Theorem 3 \cite{FLM2})\label{reptheorem}
The representation of $\hat{\h}[\nu]$ on $V_L^T$ extends uniquely to a Lie algebra representation of $\hat{\g}[\hnu]$ on $V_L^T$ such that
\begin{equation*}
(x_{\alpha})_{(vm)}\otimes t^n\mapsto \xa{}{m}
\end{equation*}
for all $m\in\frac{1}{v}\mathbb{Z}$ and $\alpha\in L$. Moreover $V_L^T$ is irreducible as a $\hat{\g}[\hnu]$-module.
\end{thm}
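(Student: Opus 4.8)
The plan is to leverage the twisted Jacobi identity \eqref{Jacobi}, which we have already recorded as a property of the $\hnu$-twisted $V_L$-module $V_L^T$, in order to extract precisely the defining bracket relations of $\hat{\g}[\hnu]$. Since the Heisenberg subalgebra $\hh[\nu]$ already acts on $V_L^T$ through the modes $\a^{\hnu}(m)$, it suffices to prescribe the action of the root vectors by $(x_\a)_{(vm)}\otimes t^m \mapsto x^{\hnu}_\a(m)$ and then to verify that, taken together, these operators satisfy the brackets of $\hat{\g}[\hnu]$. The uniqueness and the affine structure will both be read off from the same computation.

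First I would record the commutator formula obtained by applying $\text{Res}_{x_0}$ to \eqref{Jacobi}. Taking $\text{Res}_{x_0}$ annihilates the first delta function on the left and, on the right, collapses the sum $\frac{1}{v}\sum_{j\in\Z/v\Z}\delta(\cdots)$ against the residue, so that the commutator $[Y^{\hnu}(u,x_1),Y^{\hnu}(v,x_2)]$ is expressed through $Y^{\hnu}(\cdot,x_2)$ applied to the singular part of the untwisted product $Y(\hnu^j u,x_0)v$ in $V_L$. This is the step that converts the twisted module structure into genuine Lie-algebraic commutation relations.

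Next I would specialize $u$ and $v$. For the Cartan-type relations, take $u$ a Heisenberg vector and $v=\iota(e_\b)$ (and symmetrically); the untwisted operator product has a single simple pole whose residue, after the projection $P_0$ onto $\h_{(0)}$ and the $j$-sum weighted by the roots of unity $\eta^j$, yields relations of the form $[\a^{\hnu}(m),x^{\hnu}_\b(n)]=\langle\a,\b\rangle\,x^{\hnu}_\b(m+n)$, matching $[h,x_\b]=\langle h,\b\rangle x_\b$. For the root-root relations, take $u=\iota(e_\a)$, $v=\iota(e_\b)$ and use the explicit lattice operator product $Y(\hnu^j\iota(e_\a),x_0)\iota(e_\b)$, whose leading singular coefficients are governed by $\langle\nu^j\a,\b\rangle$ and the cocycle $\epsilon_{C_0}$. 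The $x_0^{-1}$-coefficient, summed over $j$ against $\eta^j$, reproduces the structure constant $\epsilon_{C_0}(\a,\b)x^{\hnu}_{\a+\b}$ when $\a+\b\in\Delta$ and the Cartan term $\epsilon_{C_0}(\a,-\a)\a$ when $\a+\b=0$, while the $x_0^{-2}$-coefficient produces the central term $\langle x_\a,x_\b\rangle\,m\,\delta_{m+n,0}c$. These are exactly the brackets defining $\hat{\g}[\hnu]$, the commutator map $C$ and the function $\psi$ having been chosen so that the $j$-sum over roots of unity collapses correctly.

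The main obstacle will be this last matching: one must carefully track the normalization $v^{-\langle\a,\a\rangle/2}\sigma(\a)$, the fractional powers $z^{-m-\langle\a,\a\rangle/2}$, and the phases arising from $\hnu e_\a=\psi(\a)e_{\nu\a}$ and from $\epsilon_{C_0}(\nu\a,\nu\b)$, and then verify that the sum $\frac{1}{v}\sum_{j}(-\eta^{j})^{\langle\nu^j\a,\b\rangle}$, i.e. the commutator map $C$, selects exactly the surviving powers of $x_0$ that realize the affine brackets. Finally, uniqueness is immediate since $\hh[\nu]$ together with the modes $x^{\hnu}_\a(m)$ generate $\hat{\g}[\hnu]$ and the action is already prescribed on these generators; irreducibility follows from the standard cyclicity argument for twisted lattice modules (as in Theorem 9.1 of \cite{L1}), using that $T=\CC_\tau$ is an irreducible $\hat{N}$-module, that $S(\hh[\nu]^-)$ is freely generated by the Heisenberg modes, and that the root operators $x^{\hnu}_\a(m)$ act transitively among the $\CC[L/N]$-components, together with the nondegeneracy of $\langle\cdot,\cdot\rangle$.
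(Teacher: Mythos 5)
The paper does not actually prove this theorem---it imports it verbatim from the cited sources (Theorem 9.1 of \cite{L1}, Theorem 3 of \cite{FLM2}, Theorem 3.1 of \cite{CalLM4})---and your outline reproduces exactly the argument of those references: taking $\mathrm{Res}_{x_0}$ of the twisted Jacobi identity yields the commutator formula \eqref{comform}, specializing to Heisenberg vectors and to $\iota(e_{\a})$, $\iota(e_{\b})$ recovers the defining brackets of $\hat{\g}[\hnu]$ including the central term (with the $\psi$-, $\sigma$- and cocycle bookkeeping you correctly flag as the delicate point), uniqueness is immediate because the action is prescribed on a generating set, and irreducibility follows from the irreducibility of $S[\nu]$ under the Heisenberg algebra together with the transitive action of the lattice operators on the components of $\CC[L/N]$, exactly as in Lepowsky's proof. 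So your proposal is correct in outline and takes essentially the same route as the proof the paper cites.
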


\subsection{Gradings}
As in Section 2 of \cite{CalLM4} (also Section 6 of \cite{L1}) we have a tensor product grading on $V_L^T$ given by the action of $L^{\hnu}(0)$, where \begin{equation*} Y^{\hnu}(\omega,z)=\sum_{m\in\ZZ}L^{\hnu}(m)z^{-m -2},\end{equation*}
 which we call the {\em weight grading}. In particular, we have
\begin{equation*}\begin{aligned}
\wt(1)&=\frac{l-1}{16}, \text{ for } A_{2l-1}\\
\wt(1)&=\frac{1}{16}, \text{ for } D_l \text{ when } v=2\\
\wt(1)&=\frac{1}{8}, \text{ for } E_6,\\
\wt(1) &= \frac{1}{9}, \text{ for } D_4 \text{ when } v=3.\end{aligned}\end{equation*}
From \cite{PS1}--\cite{PS2}, we recall that
\begin{equation*}
\wt(\xa{}{m})=-m-1+\frac{1}{2}\left<\alpha,\alpha\right>
\end{equation*}
for $m \in \frac{1}{v}\mathbb{Z}$ and $\alpha \in L$.

We endow $V_L^T$ with {\em charge} gradings (see \cite{PS1}--\cite{PS2}). In particular,
In the case of $A_{2l-1}$, we have
\be \label{charge1}
\text{ch}(\xa{}{m})=\left<2\left<\a,(\lambda_{1})_{(0)}\right>,\dots,2\left<\a,(\lambda_{l-1})_{(0)}\right>,\left<\a,(\lambda_{l})_{(0)}\right>\right).\ee
In the case of $D_l$ when $v=2$ we have
\be\label{charge2}\text{ch}(\xa{}{m})=\left<\left<\a,(\lambda_{1})_{(0)}\right>,\dots,\left<\a,(\lambda_{l-2})_{(0)}\right>,2\left<\a,(\lambda_{l-1})_{(0)}\right>\right).\ee
In the $E_6$ case we have 
\be\label{charge3}
\text{ch}(\xa{}{m})=\left(2\left<\a,(\lambda_{1})_{(0)}\right>,2\left<\a,(\lambda_{2})_{(0)}\right>,\left<\a,(\lambda_{3})_{(0)}\right>,\left<\a,(\lambda_{4})_{(0)}\right>\right),\ee
and finally in the case of $D_4$ when $v=3$ we have
\be\label{charge4}
\text{ch}(\xa{}{m})=\left(3\left<\a,(\lambda_{1})_{(0)}\right>,\left<\a,(\lambda_{2})_{(0)}\right>\right).
\ee
We note that the $\lambda_i$ here are the fundamental weights of the underlying finite dimensional Lie algebra $\mathfrak{g}$, dual to the roots: 
\begin{equation*}
\langle \lambda_i, \alpha_j \rangle = \delta_{i,j}
\end{equation*}
with $1 \le i,j \le \text{rank}(\mathfrak{g})$.

\subsection{Higher levels}
The primary focus of this section so far has been the construction of $V_L^T$, the basic module for the twisted affine Lie algebra $\hat{\g}[\hnu]$. In the remainder of this work we will consider the standard $\tilde{\g}[\hnu]$-module $L^{\hnu} (k \Lambda_0)$ of level $k\geq 1$ with highest weight $k\Lambda_0$ such that $\langle k\Lambda_0, c \rangle = k$ and $\langle \Lambda_0, \mathfrak{h}_{(0)} \rangle = 0=\langle \Lambda_0, d \rangle$. We note that when $k=1$, we have $L^{\hnu} (\Lambda_0) \cong V_L^T$.

Since $L ^{\hnu}(k \Lambda_0)$ is a faithful $\hnu$-twisted $L (k \Lambda_0)$-module (see \cite{Li}), where   $L (k \Lambda_0)$ denotes level $k$ standard module of untwisted affine Lie algebra $\tilde{\g}$ which has a structure of a vertex operator algebra, from Proposition 2.10 in \cite{Li} follows that for $r \in \mathbb{N}$ and $x_{\alpha} \in \mathfrak{g}$
 \begin{equation*} 
Y^{\hnu}(x_{\alpha}(-1)^r{\bf 1}, z)=Y^{\hnu}(x_{\alpha}(-1){\bf 1}, z)^r
 \end{equation*}
is a twisted vertex operator. 

We will also use the commutator formula for twisted vertex operators
\begin{equation}\label{comform}
\left[ x_{\alpha}^{\hnu}(z_1),  x_{n'\beta}^{\hnu}(z_2) \right]=\sum_{j \geq 0} \frac{(-1)^j}{j!}\left(\frac{d}{dz_1}\right)^jz_2^{-1}\frac{1}{v} \sum_{q \in \mathbb{Z}/v\mathbb{Z}}\delta\left(\eta^{q}\frac{z_1^{\frac{1}{v}}}{z_2^{\frac{1}{v}}}\right)Y^{\hnu}(\hnu^qx_{\alpha}(j)x_{\beta}(-1)^{n'}{\bf 1}, z_2).
\end{equation} 

In this work, we realize $L ^{\hnu}(k \Lambda_0)$ as a submodule of the tensor product of $k$ copies of the basic module $V_L^T \cong L^{\hnu} (\Lambda_0)$ as follows:
\begin{equation*}
L^{\hnu} (k \Lambda_0)\cong U(\tilde{\g}[\hnu]) \cdot v_{L} \subset  V_L^T \otimes  \cdots \otimes V_L^T={V_L^T} ^{\otimes k},
\end{equation*}
where $v_{L}={\bf 1}_T \otimes {\bf 1}_T \otimes \cdots \otimes {\bf 1}_T$ is a highest weight vector of $L ^{\hnu}(k \Lambda_0)$ and where ${\bf 1}_T$ is a highest weight vector of $V_L^T$ (cf. \cite{K}). 

It is known that $V_L^{\otimes k}= V_L \otimes  \cdots \otimes V_L$ has a structure of vertex operator algebra.  If we denote by $\hnu$ the automorphism $\hnu\otimes  \cdots \otimes \hnu$ of the vertex operator algebra $V_L^{\otimes k}$, then we have $\hnu^v=1$ and one can also define vertex operators corresponding to elements $v_1\otimes \cdots \otimes v_k \in {V_L^T} ^{\otimes k}$ as tensor products of twisted vertex operators on the appropriate tensor factors $Y^{\hnu}(v_1, z) \otimes \cdots \otimes Y^{\hnu}(v_k, z)$. In this way ${V_L^T} ^{\otimes k}$ becomes an irreducible $\hnu$-twisted module for the vertex operator algebra $V_L^{\otimes k}$ (cf. \cite{Li}), with   
 $$Y^{\hnu}(x_{\alpha}(-1)\cdot({\bf 1}\otimes \cdots \otimes {\bf 1}), z)=\sum_{m \in \frac{1}{v}\mathbb{Z}}x_{\alpha}^{\hnu} (m)z^{-m-1}.$$ 

\section{Principal Subspaces}
In this section we define the notion principal subspace of $L ^{\hnu}(k \Lambda_0)$ and twisted quasi-particles which we will use in the description of our bases.

First, denote by 
\begin{equation*}
\n=\coprod_{\alpha\in\Delta_{+}}\mathbb{C}x_\alpha,\end{equation*}
the $\nu$-stable Lie subalgebra of $\g$ (the nilradical of a Borel subalgebra), its $\hnu$-twisted affinization
\begin{equation*}
\hat{\n}[\hnu]=\coprod_{m\in\frac{1}{v}\mathbb{Z}}\n_{(vm)}\otimes t^{m}\oplus\mathbb{C}c 
\end{equation*}
 and the subalgebra of $\hat{\n}[\hnu]$ 
\begin{equation*}\begin{aligned}
\overline{\n}[\hnu]&=\coprod_{m\in\frac{1}{v}\mathbb{Z}}\n_{(vm)}\otimes t^m.
\end{aligned}\end{equation*}

Following \cite{FS1}--\cite{FS2} (see also \cite{CalLM4}, \cite{CalMPe}, and \cite{PS1}--\cite{PS2}), we define the
principal subspace $W_{L_k}^T$ of $L ^{\hnu}(k \Lambda_0)$ as: 
\begin{equation*}
W_{L_k}^T=U\left(\overline{\n}[\hnu]\right)\cdot v_{L}.
\end{equation*}

\subsection{Properties of $W_{L_k}^T$}
We now recall some important properties of the operators $\xa{}{m}$ on $W_{L_k}^T$. We recall from \cite{PS1}--\cite{PS2} that
\begin{equation*}\begin{aligned}
x_{\nu\a}^{\hnu}(m)&=\xa{}{m} \text{ for } m\in\ZZ\\
x_{\nu\a}^{\hnu}(m)&=-\xa{}{m} \text{ for } m\in\frac{1}{2}+\ZZ.\end{aligned}\end{equation*}
when $v=2$ and that
\begin{equation*}\begin{aligned}
x_{\a_3}^{\hnu}(m)&=\xa{1}{m} \text{ for } m\in\ZZ\\
x_{\a_3}^{\hnu}(m)&=\eta\xa{1}{m} \text{ for } m\in\frac{1}{3}+\ZZ\\
x_{\a_3}^{\hnu}(m)&=\eta^2\xa{1}{m} \text{ for } m\in\frac{2}{3}+\ZZ
\end{aligned}\end{equation*}
and
\begin{equation*}\begin{aligned}
x_{\a_4}^{\hnu}(m)&=\xa{1}{m} \text{ for } m\in\ZZ\\
x_{\a_4}^{\hnu}(m)&=\eta^2\xa{1}{m} \text{ for } m\in\frac{1}{3}+\ZZ\\
x_{\a_4}^{\hnu}(m)&=\eta\xa{1}{m} \text{ for } m\in\frac{2}{3}+\ZZ
\end{aligned}\end{equation*}
when $v=3$. In particular, we need only choose one representative from the orbit of each simple root $\alpha_i$ when working with the operators $x_{\a_i}^{\hnu}(m)$.

For every simple root $\alpha_i$ consider the one-dimensional subalgebra of $\mathfrak{g}$
\begin{equation*}
\n_{\alpha_i}= \mathbb{C}x_{\alpha_i},
\end{equation*} 
and their $\hnu$-twisted affinizations
\begin{equation*}
\overline{\n}_{\alpha_i}[\hnu]=\coprod_{m\in\frac{1}{v}\mathbb{Z}}{\n_{\alpha_i}}_{(vm)}\otimes t^m.
\end{equation*} 
In the case of $A_{2l-1}^{(2)}$ we define a special subspace of $\overline{\n}[\hnu]$ 
\begin{equation*}
U=U(\overline{\n}_{\alpha_l}[\hnu])U(\overline{\n}_{\alpha_{l-1}}[\hnu])  \cdots U(\overline{\n}_{\alpha_1}[\hnu]). 
\end{equation*} 
 Similary, for $D_{l}^{(2)}$ we define
\begin{equation*}
U=U(\overline{\n}_{\alpha_{l-1}}[\hnu])U(\overline{\n}_{\alpha_{l-2}}[\hnu])  \cdots U(\overline{\n}_{\alpha_1}[\hnu]), 
\end{equation*} 
in the case of $E_{6}^{(2)}$ we define
\begin{equation*}
U=U(\overline{\n}_{\alpha_{4}}[\hnu])U(\overline{\n}_{\alpha_{3}}[\hnu])U(\overline{\n}_{\alpha_{2}}[\hnu])U(\overline{\n}_{\alpha_1}[\hnu]),
\end{equation*} 
and for  $D_{4}^{(3)}$ we define
\begin{equation*}
U=U(\overline{\n}_{\alpha_{2}}[\hnu])U(\overline{\n}_{\alpha_1}[\hnu]) .
\end{equation*} 
The next lemma can be proved by using properties stated above and by aruing as in Lemma 3.1 of \cite{G}.
\begin{lem} 
In all of the above cases, we have that
\begin{equation*}
W_{L_k}^T=U \cdot v_{L}.
\end{equation*}
\end{lem}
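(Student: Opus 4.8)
The plan is to establish the two inclusions separately, the only nontrivial one being $W_{L_k}^T \subseteq U\cdot v_{L}$. Since each $\overline{\n}_{\alpha_i}[\hnu]$ sits inside $\nhnu$, we have $U\cdot v_{L} \subseteq \un\cdot v_{L} = W_{L_k}^T$ at once, so all the work is the reverse containment. First I would note that, because $v_{L}$ is a highest weight vector, the modes $\xa{}{m}$ with $m$ sufficiently large annihilate $v_{L}$; hence $W_{L_k}^T$ is spanned by vectors $\xbbet{1}{m_1}\cdots\xbbet{s}{m_s}\,v_{L}$ with $\beta_1,\dots,\beta_s\in\Delta_{+}$ and each $m_j\in\tfrac1v\ZZ$ ranging over a finite admissible set. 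The goal is then to rewrite every such vector as a linear combination of vectors of the form prescribed by $U$, namely ordered products of modes of the chosen simple-root representatives acting on $v_{L}$.

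The first main step is to pass from arbitrary positive roots to simple roots. In $\g$ every positive root vector is an iterated bracket of the simple root vectors $x_{\alpha_i}$, and I would lift this to the twisted affinization using the commutator formula \eqref{comform}: whenever $\alpha_i+\alpha_j\in\Delta$, the $j=0$ term of $[\xa{i}{z_1},\xa{j}{z_2}]$ produces the twisted vertex operator attached to $[x_{\alpha_i},x_{\alpha_j}]=\epsilon_{C_0}(\alpha_i,\alpha_j)x_{\alpha_i+\alpha_j}$, localized by the $\tfrac1v\sum_{q\in\ZZ/v\ZZ}$ delta average. Extracting coefficients expresses each composite-root mode $\xaa{i}{j}{m}$ through products $\xa{i}{\cdot}\xa{j}{\cdot}$ of simple-root modes, modulo correction terms coming from the higher $j$ and from the sum over $q$. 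Iterating on the height of $\beta$, and using the relations $x^{\hnu}_{\nu\alpha}(m)=\pm\xa{}{m}$ (resp.\ the cube-root-of-unity analogues when $v=3$) to collapse each $\nu$-orbit to its chosen representative, one sees that $W_{L_k}^T$ is spanned by monomials in the representative simple-root modes applied to $v_{L}$.

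The remaining, and most delicate, step is to reorder such a monomial so that its factors are grouped by color exactly in the pattern defining $U$. Factors of the same color may be taken in any order (they commute, since $\langle x_{\alpha_i},x_{\alpha_i}\rangle=0$ forces a vanishing bracket), and factors of non-adjacent colors commute because then $\alpha_i+\alpha_j\notin\Delta$; the only genuine swaps are between adjacent colors, where \eqref{comform} reintroduces composite-root modes that must themselves be re-expanded by the previous step and re-sorted. I would organize this as an induction on a well-founded order on monomials --- by total charge, then by total energy, then by a lexicographic refinement measuring how far the coloring is from the target order --- and show that each commutator correction is strictly smaller, so that it already lies in $U\cdot v_{L}$ by the inductive hypothesis, the base cases being the already-ordered monomials. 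The main obstacle is precisely this termination bookkeeping in the twisted setting: one must check that every correction generated by \eqref{comform}, which here carries both the derivative tower indexed by $j$ and the average $\tfrac1v\sum_{q\in\ZZ/v\ZZ}$ over roots of unity absent from the untwisted argument in Lemma 3.1 of \cite{G}, is strictly lower in the chosen order and stays inside $U\cdot v_{L}$. Verifying this, together with the orbit-collapsing relations above, is what adapts the untwisted argument to $A_{2l-1}^{(2)}$, $D_l^{(2)}$, $E_6^{(2)}$ and $D_4^{(3)}$.
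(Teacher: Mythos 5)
Your proposal is correct and is essentially the paper's own approach: the paper proves this lemma simply by invoking the orbit-collapsing relations $x^{\hnu}_{\nu\alpha_i}(m)=\pm x^{\hnu}_{\alpha_i}(m)$ (and their $v=3$ analogues) and ``arguing as in Lemma 3.1 of \cite{G}'', which is exactly the reduction to simple-root modes followed by the color-sorting induction with strictly smaller commutator corrections that you outline (noting that the corrections are most cleanly handled via the exact Lie bracket in $\hat{\g}[\hnu]$, where no higher-$j$ derivative terms arise, rather than the full vertex-operator formula). One refinement: same-color commutativity in the twisted setting requires not just $\langle x_{\alpha_i},x_{\alpha_i}\rangle=0$ but also that $\alpha_i+\nu^q\alpha_i$ is neither a root nor zero for these diagrams --- precisely the feature that fails for $A_{2l}^{(2)}$, which is why that case is excluded from the paper.
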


\subsection{Twisted quasi-particles}
For each simple root $\alpha_i$, $r \in \mathbb{N}$, and $m \in \frac{1}{v}\mathbb{Z}$ define the twisted quasi-particle of {\em color} $i$, {\em charge} $r$ and {\em energy} $-m$ by
\begin{equation*} 
x^{\hnu}_{r\alpha_i}(m)=\text{Res}_z \{z^{m+r-1} x^{\hnu}_{r\alpha_i}(z)\},
\end{equation*}
where 
\begin{equation*}
x^{\hnu}_{r\alpha_i}(z)=\sum_{m\in \frac{1}{v}\mathbb{Z}}x^{\hnu}_{r\alpha_i}(m)z^{-m-r}
\end{equation*}
is the twisted vertex operator  
\begin{equation*} 
x^{\hnu}_{r\alpha_i}(z)=Y^{\hnu}(x_{\alpha_i}(-1)^r{\bf 1}, z).
\end{equation*}

As in the untwisted case (see \cite{G}, \cite{Bu1}--\cite{Bu3}), we build twisted quasi-particle monomials from twisted quasi-particles. We say that the monomial
\begin{equation*}b= b^{\hnu}(\alpha_{l})\cdots b^{\hnu}(\alpha_{1})=\end{equation*}
\begin{equation*}=x^{\hnu}_{n_{r_{l}^{(1)},l}\alpha_{l}}(m_{r_{l}^{(1)},l}) \cdots  x^{\hnu}_{n_{1,l}\alpha_{l}}(m_{1,l})\cdots 
x^{\hnu}_{n_{r_{1}^{(1)},1}\alpha_{1}}(m_{r_{1}^{(1)},1}) \cdots  x^{\hnu}_{n_{1,1}\alpha_{1}}(m_{1,1}),\end{equation*} 
is of {\em charge-type} 
\begin{equation*}\mathcal{R}'=\left(n_{r_{l}^{(1)},l}, \ldots ,n_{1,l};\ldots ;n_{r_{1}^{(1)},1}, \ldots ,n_{1,1}\right),\end{equation*} where $0 \leq n_{r_{i}^{(1)},i}\leq \ldots \leq  n_{1,i}$, {\em dual-charge-type}
\begin{equation*} \mathcal{R}=\left(r^{(1)}_{l},\ldots , r^{(s_{l})}_{l};\ldots ;r^{(1)}_{1},\ldots , r^{(s_{1})}_{1} \right),\end{equation*}
where $r^{(1)}_{i}\geq r^{(2)}_{i}\geq \ldots \geq  r^{(s_{i})}_{i}\geq 0$, and {\em color-type}
\begin{equation*} \left(r_{l},\ldots, r_{1}\right),\end{equation*}
where 
\begin{equation*} 
r_i=\sum_{p=1}^{r_{i}^{(1)}}n_{p,i}=\sum^{s_{i}}_{t=1}r^{(t)}_{i} \ \ \text{and} \ \ s_{i}\in \mathbb{N},
\end{equation*}
if for every color $i$, $1 \leq i \leq l$,
$\left(n_{r_{i}^{(1)},i}, \ldots ,n_{1,i}\right)$ and $\left(r^{(1)}_{i}, r^{(2)}_{i}, \ldots , 
r^{(s)}_{i}\right)$ are mutually conjugate partitions of $r_i$ (cf. \cite{Bu1}--\cite{Bu3}, \cite{G}). 

For two monomials $b$ and $\overline{b}$ with charge-types $\mathcal{R}'$ and $\overline{\mathcal{R}'}=\left(\overline{n}_{\overline{r}_{l}^{(1)},l}, \ldots ,\overline{n}_{1,l};\ldots;\right.$$\left.\ldots;\overline{n}_{\overline{r}_{1}^{(1)},1}, \ldots ,\bar{n}_{1,1}\right)$ and with energies $\left(m_{r_{l}^{(1)},l},\ldots , m_{1,1}\right)$ and $\left(\overline{m}_{\overline{r}_{l}^{(1)},l}, \ldots
,\overline{m}_{1,1}\right)$ (which we write so that energies of twisted quasi-particles of the same color and the same charge form an increasing sequence of integers from right to the left), respectively, we write $b < \bar{b}$ if one of the following conditions holds:
\begin{itemize}
\item[1.]\label{lin1} $\mathcal{R}'<\overline{\mathcal{R}'}$
\item[2.]\label{lin2} $\mathcal{R}'=\overline{\mathcal{R}'}$ and $\left(m_{r_{l}^{(1)},l},\ldots , m_{1,1}\right)
<
\left(\overline{m}_{\overline{r}_{l}^{(1)},l}, \ldots
,\overline{m}_{1,1}\right),$
\end{itemize}
where we write $\mathcal{R}'<\overline{\mathcal{R}'}$ if there exists $u \in \mathbb{N}$ such that $n_{1,i}=\overline{n}_{1,i}, n_{2,i}=\overline{n}_{2,i},\ldots , n_{u-1,i}=\overline{n}_{u-1,i},$ and either
$u=\overline{r}_{i}^{(1)}+1$ or $n_{u,i}<\overline{n}_{u,i}$, starting from color $i=1$. In the case that $\mathcal{R}' = \overline{\mathcal{R}'}$,  we apply this definition to the energies to similarly define $\left(m_{r_{l}^{(1)},l},\ldots , m_{1,1}\right)
<
\left(\overline{m}_{\overline{r}_{l}^{(1)},l}, \ldots
,\overline{m}_{1,1}\right).$

\section{Combinatorial Bases}
In this section we prove relations among twisted quasi-particles which we will use in the construction of our combinatorial bases for $W_{L_k}^T$. 

\subsection{Relations among twisted quasi-particles}
For every color $i$ we have the following relations:
\begin{equation}\label{rel1}
x^{\hnu}_{(k+1)\alpha_i}(z)=0 
\end{equation}
and  
\begin{equation}\label{rel2}
x^{\hnu}_{r\alpha_i}(z)v_{L} \in W^T_{L_k}\left[\left[ z \right]\right]
\end{equation}   
when $\hnu \alpha_i=\alpha_i$, 
\begin{equation}\label{rel3}
x^{\hnu}_{r\alpha_i}(z)v_{L} \in z^{-\frac{1}{2}} W^T_{L_k}\left[\left[ z^{\frac{1}{2}}\right]\right]\end{equation} 
when $\hnu \alpha_i\neq \alpha_i$ and $v=2$, and 
\begin{equation}\label{rel4}
x^{\hnu}_{r\alpha_i}(z)v_{L} \in z^{-\frac{2}{3}} W^T_{L_k}\left[\left[ z^{\frac{1}{3}}\right]\right]\end{equation}
when $\hnu \alpha_i\neq \alpha_i$ and $v=3$, which all follow immediately from the fact that
\begin{equation*}
x^{\hnu}_{r\alpha_i}(m)v_{L}  = 0 
\end{equation*}
whenever $m\ge 0$.
We will use also the following relations among quasi-particles of the same color:
\begin{lem}\label{rel5} Let $1\leq n\leq n'$ be fixed.
\begin{itemize}
\item[a)]\label{a} If $\hnu \alpha_i=\alpha_i$ and $M,j\in \mathbb{Z}$ are fixed, the $2n$ monomials from the set
\begin{multline*} A=\{x^{\hnu}_{n\alpha_i}(j)x^{\hnu}_{n'\alpha_i}(M-j),x^{\hnu}_{n\alpha_i}(j-1)x^{\hnu}_{n'\alpha_i}(M-j+1),
\ldots \\
 \ldots , x^{\hnu}_{n\alpha_i}(j-2n+1)x^{\hnu}_{n'\alpha_i}(M-j+2n-1)\}\end{multline*}
can be expressed as a linear combination of monomials from the set \begin{equation*} \left\{x^{\hnu}_{n\alpha_i}(m)x^{\hnu}_{n'\alpha_i}(m'):m+m'=M\right\} \setminus A\end{equation*} and monomials which have as a factor the quasi-particle $x^{\hnu}_{(n'+1)\alpha_i}(j')$, $j' \in \mathbb{Z}$.
\item[b)] If $\hnu \alpha_i\neq\alpha_i$ and $M,j\in \frac{1}{v}\mathbb{Z}$ are fixed, the $2n$ monomials from the set
\begin{multline*} B=\{x^{\hnu}_{n\alpha_i}(j)x^{\hnu}_{n'\alpha_i}(M-j),x^{\hnu}_{n\alpha_i}(j-\frac{1}{v})x^{\hnu}_{n'\alpha_i}(M-j+\frac{1}{v}),
\ldots \\
 \ldots , x^{\hnu}_{n\alpha_i}(j-\frac{2n-1}{v})x^{\hnu}_{n'\alpha_i}(M-j+\frac{2n-1}{v})\}\end{multline*}
can be expressed as a linear combination of monomials from the set \begin{equation*} \left\{x^{\hnu}_{n\alpha_i}(m)x^{\hnu}_{n'\alpha_i}(m'):m+m'=M\right\} \setminus B\end{equation*} and monomials which have as a factor the quasi-particle $x^{\hnu}_{(n'+1)\alpha_i}(j')$, $j' \in \frac{1}{v}\mathbb{Z}$.
\end{itemize}
\end{lem}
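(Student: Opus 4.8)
The plan is to reduce both parts to a single two-variable generating-function identity and then to extract modes. Write
\[
P(z_1,z_2)=x^{\hnu}_{n\alpha_i}(z_1)\,x^{\hnu}_{n'\alpha_i}(z_2),
\]
and observe that, by the relations recalled just before the lemma (namely $x^{\hnu}_{\nu\alpha}(m)=\pm\,x^{\hnu}_{\alpha}(m)$ and its order-$3$ analogue), it suffices to work with one fixed representative of the orbit of $\alpha_i$, so that the only structural datum entering the computation is $\langle\alpha_i,\alpha_i\rangle=2$. Both statements (a) and (b) then amount to showing that, modulo monomials carrying a charge-$(n'+1)$ factor $x^{\hnu}_{(n'+1)\alpha_i}(j')$, the series $P(z_1,z_2)$ is divisible by $(z_1-z_2)^{2n}$ in case (a), and by the twisted analogue built from the factors $(z_1^{1/v}-\eta^{j}z_2^{1/v})$, $j\in\mathbb{Z}/v\mathbb{Z}$, in case (b). Since $2n=2\min(n,n')$, such a divisibility forces, at each fixed total degree $M=m+m'$, exactly $2n$ linear relations among the coefficients, which are what produce the windows $A$ and $B$.

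The core of the argument is this divisibility statement, which I would establish using the twisted-module structure. Using $x^{\hnu}_{n\alpha_i}(z)=Y^{\hnu}(x_{\alpha_i}(-1){\bf 1},z)^{n}$ (Li's Proposition 2.10, as recalled in the text), together with the associativity/iterate property and the commutator formula \eqref{comform} for twisted vertex operators (from \cite{Li} and \cite{CalLM4}), one rewrites $P$ as $Y^{\hnu}\!\big(Y(x_{\alpha_i}(-1)^{n}{\bf 1},z_1-z_2)\,x_{\alpha_i}(-1)^{n'}{\bf 1},\,z_2\big)$ in the appropriate expansion. The inner, \emph{untwisted} vector-valued series $Y(x_{\alpha_i}(-1)^{n}{\bf 1},z_0)\,x_{\alpha_i}(-1)^{n'}{\bf 1}$ organizes by charge: its part of top charge $n+n'\geq n'+1$ yields precisely the higher-charge quasi-particles $x^{\hnu}_{(n+n')\alpha_i}$ and their descendants (the error term), while the interaction of the charge-$n$ block with the charge-$n'$ block contributes the factor $(z_1-z_2)^{2n}$ governing the same-charge part. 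This is the twisted analogue of the computation in \cite{G} (cf. \cite{Bu1}); it gives
\[
P(z_1,z_2)=(z_1-z_2)^{2n}\,Q(z_1,z_2)+\big(\text{monomials with a factor }x^{\hnu}_{(n'+1)\alpha_i}(j')\big),
\]
with $Q$ a (twisted) power series, and the corresponding fractional-power version in case (b).

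Finally I would extract coefficients. Divisibility by $(z_1-z_2)^{2n}$ is equivalent to the vanishing of the first $2n$ diagonal Taylor coefficients of $P-(\text{higher charge})$ at each homogeneity degree, so the coefficient of a fixed $M=m+m'$ gives $2n$ relations; the $2n\times 2n$ matrix indexed by the window is a shifted binomial (Vandermonde-type) matrix and hence invertible, letting one solve for the products in $A$ (resp.\ $B$) in terms of the products with $m+m'=M$ lying outside the window, together with the higher-charge terms. In case (b) the extraction is carried out in the $\tfrac1v\mathbb{Z}$-grading, and summing the $\eta$-twisted factors over $j\in\mathbb{Z}/v\mathbb{Z}$, exactly as in the right-hand side of \eqref{Jacobi}, produces the window $B$ with spacing $\tfrac1v$; this is the same mechanism that replaces the $(q)_\bullet$ Pochhammer symbols by $(q^{1/v};q^{1/v})_\bullet$ in the corresponding factor of the final character.

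The hard part will be the middle step: tracking the twisted operator product carefully enough to certify that the diagonal zero surviving the charge-$(n'+1)$ error term has order exactly $2n$, and, in case (b), handling the fractional-power factors $(z_1^{1/v}-\eta^{j}z_2^{1/v})$ so that coefficient extraction in the $\tfrac1v\mathbb{Z}$-grading yields precisely the $\tfrac1v$-spaced window $B$ rather than a coarser or finer set. Everything else is formal manipulation of generating functions and elementary linear algebra; the genuinely new content beyond \cite{G} and \cite{Bu1} is the twisted bookkeeping.
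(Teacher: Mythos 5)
Your end-game coincides with the paper's proof: both reduce the lemma, at each fixed total energy $M$, to a system of $2n$ linear equations in the $2n$ window monomials whose coefficient matrix is the shifted binomial matrix $\left[\binom{p+j/v}{N}\right]_{N,j=0}^{2n-1}$ (with $v=1$ in case (a)), everything outside the window and all charge-$(n'+1)$ terms being moved to the right-hand side. Your invertibility argument is in fact cleaner than the paper's: since $\binom{x}{N}$, $0\leq N\leq 2n-1$, is a triangular basis (with diagonal $1/N!$) of the polynomials of degree at most $2n-1$ and the nodes $p+\frac{j}{v}$ are distinct, the determinant is
\begin{equation*}
\prod_{N=0}^{2n-1}\frac{1}{N!}\ \prod_{0\leq j<j'\leq 2n-1}\frac{j'-j}{v}=v^{-n(2n-1)},
\end{equation*}
which is exactly the value the paper obtains by a Chu--Vandermonde factorization into a unipotent Pascal matrix times the generalized Pascal matrix $\left[\binom{j/v}{N}\right]$ followed by a lengthy explicit row reduction. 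So your ``elementary linear algebra'' step is correct and reproduces the paper's computation in two lines.

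The gap is the middle step, which you yourself defer, and the route you propose for it is both heavier than necessary and misaligned with case (b). The paper never invokes the iterate/associativity formula or the Jacobi identity here: because $\langle \nu^{j}\alpha_i,\nu^{j'}\alpha_i\rangle=0$ for $j\neq j'$, all modes of $x^{\hnu}_{\alpha_i}(z)$ commute, so Li's result gives $x^{\hnu}_{r\alpha_i}(z)=x^{\hnu}_{\alpha_i}(z)^{r}$ literally, and a Leibniz expansion does the rest: every term of $\frac{1}{N!}\big(x^{\hnu}_{n\alpha_i}(z)\big)^{(N)}x^{\hnu}_{n'\alpha_i}(z)$ with $N\leq 2n-1$ is a product $x^{(j_1)}\cdots x^{(j_n)}x^{n'}$ with $j_1+\cdots+j_n\leq 2n-1$, so some $j_a\in\{0,1\}$, and the term therefore contains either the factor $x^{\hnu}_{(n'+1)\alpha_i}(z)$ or $x'x^{n'}=\frac{1}{n'+1}\frac{d}{dz}x^{\hnu}_{(n'+1)\alpha_i}(z)$; this is precisely the displayed identity $\frac{1}{N!}\big(x^{\hnu}_{n\alpha_i}(z)\big)^{(N)}x^{\hnu}_{n'\alpha_i}(z)=A^{\hnu}_1(z)x^{\hnu}_{(n'+1)\alpha_i}(z)+A^{\hnu}_2(z)\frac{d}{dz}x^{\hnu}_{(n'+1)\alpha_i}(z)$, and it is equivalent, via Taylor expansion in $(z_1-z_2)$, to your divisibility claim (note you only need order \emph{at least} $2n$, not exactly $2n$). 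Your iterate-formula route would instead force you to track the $\hat\nu^{j}$-translates $x_{\nu^{j}\alpha_i}(-1)^{n}\mathbf{1}$ and mixed-orbit vectors --- exactly the bookkeeping you leave open. More substantively, your mechanism for case (b) is misplaced: the $\frac{1}{v}$-spacing of the window $B$ does not come from fractional factors $(z_1^{1/v}-\eta^{j}z_2^{1/v})$ nor from summing over $j\in\mathbb{Z}/v\mathbb{Z}$ as in \eqref{Jacobi}. The paper's proof of (b) uses only integer-order $z$-derivatives (plain diagonal vanishing of order $2n$), and the spacing arises solely because the modes lie in $\frac{1}{v}\mathbb{Z}$: extracting the coefficient at fixed $M\in\frac{1}{v}\mathbb{Z}$ yields the binomials $\binom{-m-n}{N}$ evaluated at $\frac{1}{v}$-spaced arguments, which is where the matrix above comes from. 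As written, then, the proposal has the right skeleton but its central claim is unproven, and the proposed derivation would need repair in case (b); the commuting-modes Leibniz argument closes the gap immediately.
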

\begin{proof}
The proof of part a) is identical to the proof of Lemma 4.4 in \cite{JP}. Part b) can be proven analogously, but now for fixed $M\in \frac{1}{v}\mathbb{Z}$ and $m=\frac{j}{v}, \frac{j-1}{v}, \ldots, \frac{j-2n+1}{v}$ we have  
\begin{eqnarray}\nonumber
&\frac{1}{N!}\left(x^{\hnu}_{n\alpha_i}(z)\right)^{(N)}x^{\hnu}_{n'\alpha_i}(z)
=\sum_{M \in  \frac{1}{v}\mathbb{Z}}\left( \sum_{\substack{m,m'\in
 \frac{1}{v}\mathbb{Z}\\ m+m'=M}}\binom{-m-n}{N}x^{\hnu}_{n\alpha_i}(m)x^{\hnu}_{n'\alpha_i}(m') \right) z^{-M-n-n'-N}\\
\nonumber 
&=A^{\hnu}_1(z)x^{\hnu}_{(n'+1)\alpha_i}(z) + A^{\hnu}_2(z)\frac{d}{dz}x^{\hnu}_{(n'+1)\alpha_i}(z),
\end{eqnarray}
where $N=0, 1, \ldots, 2n-1$, $A^{\hnu}_1(z)$ and $A^{\hnu}_2(z)$ are monomals in twisted quasi-particles. From this, we obtain a system of $2n$ equations in the $2n$ uknowns
\begin{eqnarray}\nonumber &x^{\hnu}_{n\alpha}(-p-n)x^{\hnu}_{n'\alpha}(M+p+n), x^{\hnu}_{n\alpha}(-p-n- \frac{1}{v})x^{\hnu}_{n'\alpha}(M+p+n+ \frac{1}{v}),\dots \\
\nonumber
&\dots ,x^{\hnu}_{n\alpha}(-p-n-\frac{2n-1}{v})x^{\hnu}_{n'\alpha}(M+p+n+\frac{2n-1}{v}),
\end{eqnarray}
where $p=-\frac{j}{v}-n$ with  coefficient matrix
\[\begin{bmatrix}  \begin{array}{ccccc}
\binom{p}{0} & \binom{p+\frac{1}{v}}{0}&
\dots&\binom{p+\frac{2n-2}{v}}{0}& \binom{p+\frac{2n-1}{v}}{0} \\
\\
\binom{p}{1} & \binom{p+\frac{1}{v}}{1}& 
\dots& \binom{p+\frac{2n-2}{v}}{1}& \binom{p+\frac{2n-1}{v}}{1} \\
\vdots &   
\vdots &\vdots & \vdots & \vdots  \\
\binom{p}{2n-1} & \binom{p+\frac{1}{v}}{2n-1}& 
\dots&
\binom{p+\frac{2n-2}{v}}{2n-1}& \binom{p+\frac{2n-1}{v}}{2n-1}
\end{array}\end{bmatrix}.\]
By using the Chu-Vandermonde identity
\[ \sum_{q=0}^N \binom{a}{q}\binom{b}{N-q}=\binom{a+b}{N},\]
we can write the coefficient matrix as a product of two invertible matrices
\[ \left[   \begin{array}{ccccc}
\binom{p}{0} & 0&
\dots&0& 0 \\
\\
\binom{p}{1} & \binom{p}{0}&
\dots& 0& 0 \\
\vdots &  
\vdots &\vdots & \vdots & \vdots  \\
\binom{p}{2n-1} & \binom{p}{2n-2}&  
\dots&
\binom{p}{ 1}& \binom{p}{0}
\end{array} \right]\cdot \left[   \begin{array}{ccccc}
\binom{0}{0} & \binom{\frac{1}{v}}{0}&
\dots&\binom{\frac{2n-2}{v}}{0}& \binom{\frac{2n-1}{v}}{0} \\
\\
\binom{0}{1} & \binom{\frac{1}{v}}{1}& 
\dots& \binom{\frac{2n-2}{v}}{1}& \binom{\frac{2n-1}{v}}{1} \\
\vdots &   
\vdots &\vdots & \vdots & \vdots  \\
\binom{0}{2n-1} & \binom{\frac{1}{v}}{2n-1}&...&
\binom{\frac{2n-2}{v}}{2n-1}& \binom{\frac{2n-1}{v}}{2n-1}
\end{array} \right] .\]
To see that the second matrix in the product is invertible, we first, starting from row $2n-1$, multiply row $q$ of the determinant of the matrix by $\frac{(q-1)v-1}{qv}$, where $2 \leq q \leq 2n-1$, and then add to row $q+1$, which will give us 
\[ \begin{vmatrix}   \begin{array}{cccccc}
1 & 1& 1&
 
\dots&1&1 \\
\\
0 & \frac{1}{v}& \frac{2}{v} &
\dots&  \frac{2n-2}{v} &  \frac{2n-1}{v}  \\
0 & 0& \frac{1}{v^2} &
\dots&  \frac{1}{v^2}\binom{2n-2}{2} &  \frac{1}{v^2}\binom{2n-1}{2}\\
\vdots &  \vdots & 
\vdots &\vdots & \vdots & \vdots  \\
0 & 0& \frac{1}{(2n-1)v}\binom{\frac{2}{v}}{2n-2}&
\dots&
\frac{2n-3}{(2n-1)v}\binom{\frac{2n-2}{v}}{2n-2}& \frac{2n-2}{(2n-1)v}\binom{\frac{2n-1}{v}}{2n-2}
\end{array}\end{vmatrix} .\]
We proceed with this process of reducing the determinant of our matrix to the determinant of an upper triangular matrix and assume that after $r-1$ steps we have
\[ \begin{vmatrix}  \scalemath{0.9}{  \begin{array}{cccccccc}
1 & 1&  
\dots&1&1&1 &
 
\dots&1 \\
\\
0 & \frac{1}{v}&  
\dots& \frac{r-1}{v}&   \frac{r}{v}& \frac{r+1}{v} &
\dots&   \frac{2n-1}{v}  \\
 \vdots &   
\vdots &\vdots & \vdots & \vdots & \vdots & \vdots & \vdots\\
0 & 0&  
\dots& 0&  \frac{1}{v^r}& \frac{1+r}{v^r}&\dots &  \frac{1+r}{v^r}\binom{2n-1}{r}  \\
0 & 0& 
\dots& 0&  \frac{1}{r(r-1)v^{r-1}}\binom{\frac{r}{v}}{2}& \frac{1}{(r+1)v^{r-1}}\binom{\frac{r+1}{v}}{2}&\dots &  \frac{(2n-1)\cdots(2n-r)}{v^{r-1}(r+1)\cdots 3}\binom{\frac{r+1}{v}}{2}  \\
\vdots &   
\vdots &\vdots & \vdots & \vdots & \vdots & \vdots& \vdots \\
0 & 0&  
\dots&0& 
\frac{1}{(2n-1)\cdots (2n-r-1)v^{r-1}}\binom{\frac{r}{v}}{2n-1-r-1}& \frac{r(r-1)\cdots (2n-r)}{(2n-1)\cdots (2n-r-1)v^{r-1}}\binom{\frac{r+1}{v}}{2n-1-r-1}&\dots &  \frac{(2n-1)\cdots(2n-r)}{v^{r-1}(2n-1)\cdots (2n-1-r)}\binom{\frac{2n-1}{v}}{2n-1-r-1}
\end{array}}\end{vmatrix} .\]  
Now, starting from row $2n-1$, we multiply row $q$ by $\frac{(q-r)v-r}{qv}$, where $r+1 \leq q \leq 2n-1$, and add to row $q+1$ and obtain
\[ \begin{vmatrix}   \begin{array}{cccccccc}
1 & 1& 
\dots&1&1&1 & \dots&1 \\
\\
0 & \frac{1}{v}&  
\dots& \frac{r-1}{v}&   \frac{r}{v}& \frac{r+1}{v} &
\dots&   \frac{2n-1}{v}  \\
\vdots &    
\vdots &\vdots & \vdots & \vdots & \vdots & \vdots & \vdots\\
0 & 0&  
\dots& 0&  \frac{1}{v^r}& \frac{r+1}{v^r}&\dots &  \frac{r+1}{v^r}\binom{2n-1}{r}  \\
0 & 0&  
\dots& 0&  0 & \frac{1}{v^{r+1}}&\dots &  \frac{1}{v^{r+1}}\binom{2n-1}{r+1}  \\
\vdots &   
\vdots &\vdots & \vdots & \vdots & \vdots & \vdots& \vdots \\
0 & 0&  
\dots&0& 
0& \frac{1}{(2n-1)\cdots (2n-r-2)v^{r}}\binom{\frac{r+1}{v}}{2n-1-r-2}&\dots &  \frac{(2n-1)\cdots(2n-r-1)}{v^{r}(2n-1)\cdots (2n-1-r)}\binom{\frac{2n-1}{v}}{2n-1-r-2}
\end{array}\end{vmatrix} .\]
After $2n-1$ steps we get the determinant   
\[ \begin{vmatrix}   \begin{array}{ccccc}
1 & 1& 1& 
\dots&1 \\
\\
0 & \frac{1}{v}& \frac{2}{v} &
\dots &   \frac{2n-1}{v}  \\
0 & 0& \frac{1}{v^2} &
\dots &   \frac{1}{v^2} \binom{2n-1}{2} \\
\vdots &    
\vdots &\vdots & \vdots & \vdots   \\
0 & 0&  0& 
\dots&   \frac{1}{v^{ 2n-1 }}
\end{array}\end{vmatrix} =v^{-n(2n-1)},\]
from which our claim follows.\end{proof}
\begin{rem}
We note here that a more general form of this matrix appeared in \cite{PSW}, where is was called a ``generalized Pascal matrix". In \cite{PSW}, this matrix was shown to be invertible but its determinant was not explicitly computed.
\end{rem}
From Lemma \ref{rel5} it follows that if $\hnu \alpha_i=\alpha_i$,  then the $2n$ monomial vectors
$$x^{\hnu}_{n\alpha_i}(m)x^{\hnu}_{n'\alpha_i}(m')v_L,x^{\hnu}_{n\alpha_i}(m-1)x^{\hnu}_{n'\alpha_i}(m'+1)v_L,
\ldots, x^{\hnu}_{n\alpha_i}(m-2n+1)x^{\hnu}_{n'\alpha_i}(m'+2n-1)v_L$$
such that $n < n'$ can be expressed as a (finite) linear combination of the monomial vectors
$$x^{\hnu}_{n\alpha_i}(j)x^{\hnu}_{n'\alpha_i}(j')v_L \ \ \text{such that} \ \ j \leq m-2n, \ \  j' \geq m'+2n$$
and monomial vectors with a factor quasi-particle $x^{\hnu}_{(n+1)\alpha_i}(j_1)$, $j_1 \in \mathbb{Z}$. 
If $n=n'$, then the monomial vectors 
$$x^{\hnu}_{n\alpha_i}(m)x^{\hnu}_{n'\alpha_i}(m')v_L \ \ \text{such that} \ \   m'-2n\leq m\leq m'$$
can be expressed as a linear combination of monomial vectors
$$x^{\hnu}_{n\alpha_i}(j)x^{\hnu}_{n'\alpha_i}(j')v_L, \ \ \text{such that} \ \   j\leq j'-2n$$
and monomial vectors with a factor quasi-particle $x^{\hnu}_{(n+1)\alpha_i}(j_1)$, $j_1 \in \mathbb{Z}$. 

If $\hnu \alpha_i\neq \alpha_i$, then the $2n$ monomial vectors
$$x^{\hnu}_{n\alpha_i}(m)x^{\hnu}_{n'\alpha_i}(m')v_L,x^{\hnu}_{n\alpha_i}(m-1)x^{\hnu}_{n'\alpha_i}(m'+1)v_L,
\ldots, x^{\hnu}_{n\alpha_i}(m-2n+1)x^{\hnu}_{n'\alpha_i}(m'+2n-1)v_L$$
with $n < n'$ can be expressed as a (finite) linear combination of the monomial vectors
$$x^{\hnu}_{n\alpha_i}(j)x^{\hnu}_{n'\alpha_i}(j')v_L, \ \ \text{such that} \ \ j \leq m-\frac{2n}{v}, \ \  j' \geq m'+\frac{2n}{v}$$
and monomial vectors with a factor quasi-particle $x^{\hnu}_{(n+1)\alpha_i}(j_1)$, $j_1 \in \frac{1}{v}\mathbb{Z}$. 
If $n=n'$, then the monomial vectors 
$$x^{\hnu}_{n\alpha_i}(m)x^{\hnu}_{n'\alpha_i}(m')v_L, \ \ \text{such that} \ \   m'-\frac{2n}{v}\leq m\leq m'$$
can be expressed as a linear combination of the monomial vectors
$$x^{\hnu}_{n\alpha_i}(j)x^{\hnu}_{n'\alpha_i}(j')v_L, \ \ \text{such that} \ \   j\leq j'-\frac{2n}{v}$$
and monomial vectors with a factor quasi-particle $x^{\hnu}_{(n+1)\alpha_i}(j_1)$, $j_1 \in \frac{1}{v}\mathbb{Z}$.

The following lemma describes relations among quasi-particles with different colors:
\begin{lem}
Let $1 \leq n, n' \leq k$ be fixed. Then:
\begin{itemize}\label{rel6}
\item[a)] if $v=2$ and $\langle \alpha, \beta \rangle=-1=\langle \nu \alpha, \beta \rangle$, we have:
\begin{equation}\label{rel6}
(z_1-z_2)^{\text{min}\{n,n'\}}x^{\hnu}_{n\alpha}(z_1)x^{\hnu}_{n'\beta}(z_2)=(z_1-z_2)^{\text{min}\{n,n'\}}x^{\hnu}_{n'\beta}(z_2)x^{\hnu}_{n\alpha}(z_1),
\end{equation}
\item[b)] if $v=2$ and $\langle \alpha, \beta \rangle=-1$, $\langle \nu \alpha, \beta \rangle=0$, we have:\begin{equation}\label{rel7}(z_1^{\frac{1}{2}}-z_2^{\frac{1}{2}})^{\text{min}\{n,n'\}}x^{\hnu}_{n\alpha}(z_1)x^{\hnu}_{n'\beta}(z_2)=(z_1^{\frac{1}{2}}-z_2^{\frac{1}{2}})^{\text{min}\{n,n'\}}x^{\hnu}_{n'\beta}(z_2)x^{\hnu}_{n\alpha}(z_1), \end{equation} 
\item[c)] if $v=3$ for $\alpha=\alpha_1$, $\beta =\alpha_2$, we have:
\begin{equation}\label{rel8}
(z_1 -z_2 )^{\text{min}\{n,n'\}}x^{\hnu}_{n\alpha}(z_1)x^{\hnu}_{n'\beta}(z_2)=(z_1 -z_2 )^{\text{min}\{n,n'\}}x^{\hnu}_{n'\beta}(z_2)x^{\hnu}_{n\alpha}(z_1).
\end{equation}
\end{itemize}
\end{lem}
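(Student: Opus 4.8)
The plan is to establish these commutation relations directly from the twisted Jacobi identity \eqref{Jacobi}, or equivalently from its consequence the commutator formula \eqref{comform}, by identifying exactly which terms on the right-hand side fail to vanish and showing that a suitable power of $(z_1-z_2)$ (or $(z_1^{1/2}-z_2^{1/2})$) annihilates them. First I would write out the commutator $[x^{\hnu}_{n\alpha}(z_1), x^{\hnu}_{n'\beta}(z_2)]$ using \eqref{comform}, where the key quantity is $Y^{\hnu}(\hnu^q x_{\alpha}(-1)^n \mathbf{1} \cdot x_{\beta}(-1)^{n'}\mathbf{1}, z_2)$ summed over the delta functions $\delta(\eta^q z_1^{1/v}/z_2^{1/v})$ together with their derivatives $(d/dz_1)^j$. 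Since $\hnu^q x_{\alpha} = \eta^{q\langle\cdot\rangle}$-type scalars times $x_{\nu^q \alpha}$, the surviving product terms are controlled by the inner products $\langle \nu^q \alpha, \beta\rangle$, and the number of nonzero derivative terms $j$ is bounded by how negative these inner products are.

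\emph{Case a)} ($v=2$, $\langle\alpha,\beta\rangle = \langle\nu\alpha,\beta\rangle = -1$). Here both $q=0$ and $q=1$ contribute, but each inner product is $-1$, so for each the only nonzero term in the $j$-sum is $j=0$, producing a first-order pole in the integral-variable expansion; multiplying by $(z_1-z_2)^{\min\{n,n'\}}$ (where $\min\{n,n'\}\ge 1$) clears the singularity coming from the $x_\alpha(0)x_\beta(-1)^{n'}\mathbf 1$ bracket. The delta functions $\delta(z_1^{1/2}/z_2^{1/2})$ and $\delta(-z_1^{1/2}/z_2^{1/2})$ combine so that $(z_1-z_2)^{\min}$ is the correct integral power that kills the commutator; I would use the standard delta-function identity $(z_1-z_2)^N z_2^{-1}\delta(z_1/z_2) = 0$ for $N\ge 1$ together with the fact that half-integral powers reorganize into an integral-power delta after both $q$-terms are added.

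\emph{Case b)} ($v=2$, $\langle\alpha,\beta\rangle=-1$, $\langle\nu\alpha,\beta\rangle=0$). Now only the $q=0$ term survives with a genuine pole, the $q=1$ term giving a regular contribution, so the combination no longer collapses to an integral-power delta; instead one gets a half-integral-power delta $\delta(\pm z_1^{1/2}/z_2^{1/2})$, which is why the clearing factor must be $(z_1^{1/2}-z_2^{1/2})^{\min\{n,n'\}}$ rather than $(z_1-z_2)^{\min\{n,n'\}}$. \emph{Case c)} ($v=3$, $\alpha=\alpha_1$, $\beta=\alpha_2$) is analogous to a): here $\alpha_2$ is $\nu$-fixed, one checks $\langle\nu^q\alpha_1,\alpha_2\rangle = -1$ for the relevant $q$, so summing the three delta functions $\delta(\eta^q z_1^{1/3}/z_2^{1/3})$ over $q\in\mathbb{Z}/3\mathbb{Z}$ again reconstitutes an integral-power delta, and $(z_1-z_2)^{\min\{n,n'\}}$ clears it. The main obstacle I anticipate is the careful bookkeeping of the cocycle and $\psi$-factors coming from $\hnu^q x_\alpha = \psi(\alpha)^{\cdots} x_{\nu^q\alpha}$ and from $\sigma(\alpha)$ in the $v=3$ normalization, and verifying that the half-integer versus integer distinction between cases a) and b) is exactly governed by whether $\langle\nu\alpha,\beta\rangle$ vanishes; the delta-function manipulations are routine once the set of surviving $(q,j)$ pairs is pinned down, so the real work is the combinatorial/root-theoretic verification of those inner products in each type.
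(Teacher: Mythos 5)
Your overall strategy coincides with the paper's (its proof of this lemma is a single sentence: apply the commutator formula \eqref{comform} and $\delta$-function properties), but your execution contains a genuine error in the identification of the surviving $(q,j)$ pairs. You claim that when $\langle \nu^q\alpha,\beta\rangle=-1$ the only nonzero term in the $j$-sum is $j=0$, so that the commutator is a sum of undifferentiated $\delta$-functions with a first-order singularity. That is true only when $\min\{n,n'\}=1$. For $u=x_\alpha(-1)^n\mathbf{1}$ and $v=x_\beta(-1)^{n'}\mathbf{1}$ with $n,n'\geq 2$, the modes $(\hnu^q u)_j v$ are nonzero for all $0\leq j\leq \min\{n,n'\}-1$: working in $L^{\hnu}(k\Lambda_0)\subset {V_L^T}^{\otimes k}$, the two quasi-particles act simultaneously on up to $\min\{n,n'\}$ common tensor factors, and each common factor contributes a first-order contraction, e.g.\ $x_\alpha(x_0)x_\beta(-1)\mathbf{1}=\epsilon_{C_0}(\alpha,\beta)\,x_{\alpha+\beta}(-1)\mathbf{1}\,x_0^{-1}+O(x_0^0)$ at level one, so the total pole order of $Y(\hnu^q u,x_0)v$ is exactly $\min\{n,n'\}$ (already $u_1v=\epsilon^2\, x_{\alpha+\beta}(-1)\mathbf{1}\otimes x_{\alpha+\beta}(-1)\mathbf{1}\otimes\cdots\neq 0$ for $n=n'=2$). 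Consequently derivatives $\left(\frac{d}{dz_1}\right)^j$ of the $\delta$-functions genuinely appear up to order $\min\{n,n'\}-1$, and one needs $(z_1-z_2)^{j+1}$ to kill each one; this is precisely why the exponent in the lemma is $\min\{n,n'\}$. As written, your argument would prove the stronger (and false) statement that a single power of $(z_1-z_2)$ commutes the series for all charges, which would in turn corrupt the difference conditions on quasi-particle energies used throughout the paper. (Relatedly, note that you cannot repair this by writing $x^{\hnu}_{n\alpha}(z_1)=(x^{\hnu}_{\alpha}(z_1))^n$ and commuting one factor at a time: products such as $x^{\hnu}_{\alpha}(z_1)\cdot[x^{\hnu}_{\alpha}(z_1),x^{\hnu}_{n'\beta}(z_2)]$ pair a $\delta$-supported series with an operator series singular on the same diagonal, and their coefficients are not well-defined.)

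A second, smaller inaccuracy: in cases a) and c) the half-integral-power deltas do \emph{not} ``reorganize into an integral-power delta after the $q$-terms are added,'' because the $q=0$ and $q\neq 0$ terms carry different operator coefficients ($Y^{\hnu}$ applied to $x_{\alpha+\beta}$-type versus $x_{\nu\alpha+\beta}$-type vectors, with distinct $\psi$- and cocycle factors). The correct, and simpler, mechanism is the factorization $z_1-z_2=\prod_{q=0}^{v-1}\left(z_1^{1/v}-\eta^{-q}z_2^{1/v}\right)$ up to a nonzero scalar, so that $(z_1-z_2)^{j+1}$ annihilates each $\left(\frac{d}{dz_1}\right)^j\delta\left(\eta^q z_1^{1/v}/z_2^{1/v}\right)$ \emph{separately}, with no cancellation between $q$-terms needed. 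In case b) your description of the $q=1$ term as ``a regular contribution'' should be sharpened: it vanishes identically in the commutator, since $\langle\nu\alpha,\beta\rangle=0$ forces $x_{\nu\alpha}(j)x_\beta(-1)^{n'}\mathbf{1}=0$ for all $j\geq 0$; only the $q=0$ deltas $\delta(z_1^{1/2}/z_2^{1/2})$ (and their derivatives up to order $\min\{n,n'\}-1$) survive, which is exactly why the half-integral factor $(z_1^{1/2}-z_2^{1/2})^{\min\{n,n'\}}$ suffices there while the full integral factor is required in cases a) and c). Your root-theoretic verifications of the inner products (including $\langle\nu^q\alpha_1,\alpha_2\rangle=-1$ for all $q$ in the $D_4$ case, since every outer node attaches to the central node) are correct.
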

\begin{proof}
The proof follows from the commutator formula for twisted vertex operators (\ref{comform}) and from properties of the $\delta$-function. 
\end{proof}

By using relations (\ref{rel1})--(\ref{rel4}) among twisted quasi-particles, Lemma \ref{rel5} and relations (\ref{rel6})--(\ref{rel8}), we define the following sets:
\begin{itemize}
\item  for $A_{2l-1}^{(2)}$:
\begin{equation*} 
B= \bigcup_{\substack{n_{r_{1}^{(1)},1}\leq \ldots \leq n_{1,1}\leq 
k\\\substack{ \ldots \\ n_{r_{l}^{(1)},l}\leq \ldots \leq n_{1,l}\leq k}}}\left(\text{or, equivalently,} \ \ \ 
\bigcup_{\substack{r_{1}^{(1)}\geq \cdots\geq r_{1}^{(k)}\geq 0\\ \cdots \\r_{l}^{(1)}\geq \cdots\geq r_{l}^{(k)}\geq 
0}}\right)
\end{equation*}
\begin{equation*}\left\{b\right.= b(\alpha_{l})\cdots b(\alpha_{1})
=x^{\hnu}_{n_{r_{n}^{(1)},l}\alpha_{l}}(m_{r_{l}^{(1)},l})\cdots x^{\hnu}_{n_{1,l}\alpha_{l}}(m_{1,l})\cdots x^{\hnu}_{n_{r_{1}^{(1)},1}\alpha_{1}}(m_{r_{1}^{(1)},1})\cdots  x^{\hnu}_{n_{1,1}\alpha_{1}}(m_{1,1}):\end{equation*}
\begin{align}\nonumber
\left|
\begin{array}{l}
m_{p,i} \in \frac{1}{2}\mathbb{Z}, \ \ \  \ 1\leq  p\leq r_{i}^{(1)}, \ 1 \leq i \leq l-1;\\
m_{p,i}\leq  -\frac{n_{p,i}}{2} + \frac{1}{2} \sum_{q=1}^{r_{i-1}^{(1)}}\text{min}\left\{n_{q,i-1},n_{p,i}\right\}- \sum_{p>p'>0} 
 \ \text{min}\{n_{p,i}, n_{p',i}\}, \ 1\leq  p\leq r_{i}^{(1)}, \ 1 \leq i \leq n-1;\\
m_{p+1,i} \leq   m_{p,i}-n_{p,i} \  \text{if} \ n_{p+1,i}=n_{p,i}, \ 1\leq  p\leq r_{i}^{(1)}-1, \ 1 \leq i \leq n-1;\\
m_{p,l} \in \mathbb{Z}, \ \ \  \ 1\leq  p\leq r_{l}^{(1)};\\
m_{p,l}\leq  -n_{p,l} + \sum_{q=1}^{r_{l-1}^{(1)}}\text{min}\left\{ n_{q,l-1},n_{p,l}\right\} - \sum_{p>p'>0} 2 \ \text{min}\{n_{p,l}, n_{p',l}\}, \  1\leq  p\leq r_{l}^{(1)};\\
m_{p+1,l}\leq  m_{p,l}-2n_{p,l} \  \text{if} \ n_{p,l}=n_{p+1,l}, \  1\leq  p\leq r_{l}^{(1)}-1  
\end{array}\right\},
\end{align}
\item for $D_l^{(2)}$:
\begin{equation*} 
B= \bigcup_{\substack{n_{r_{1}^{(1)},1}\leq \ldots \leq n_{1,1}\leq 
k\\\substack{ \ldots }\\n_{r_{l-1}^{(1)},l-1}\leq \ldots \leq n_{1,l-1}\leq k}}\left(\text{or, equivalently,} \ \ \ 
\bigcup_{\substack{r_{1}^{(1)}\geq \cdots\geq r_{1}^{(k)}\geq 0\\\substack{\cdots  }\\r_{l-1}^{(1)}\geq \cdots\geq r_{l-1}^{(k)}\geq 
0}}\right)
\end{equation*}
\begin{equation*}\left\{b\right.= b(\alpha_{l-1})\cdots b(\alpha_{1})
=x^{\hnu}_{n_{r_{l-1}^{(1)},l-1}\alpha_{l-1}}(m_{r_{l-1}^{(1)},l-1})\cdots x^{\hnu}_{n_{1,l-1}\alpha_{l-1}}(m_{1,l-1})\cdots x^{\hnu}_{n_{r_{1}^{(1)},1}\alpha_{1}}(m_{r_{1}^{(1)},1})\cdots  x^{\hnu}_{n_{1,1}\alpha_{1}}(m_{1,1}):\end{equation*}
\begin{align}\nonumber
\left|
\begin{array}{l}
m_{p,i} \in\mathbb{Z}, \ \ \  \ 1\leq  p\leq r_{i}^{(1)}, \ 1 \leq i \leq l-2;\\
m_{p,i}\leq  - n_{p,i} +  \sum_{q=1}^{r_{i-1}^{(1)}}\text{min}\left\{n_{q,i-1},n_{p,i}\right\}- \sum_{p>p'>0} 2
 \ \text{min}\{n_{p,i}, n_{p',i}\}, \ 1\leq  p\leq r_{i}^{(1)}, \ 1 \leq i \leq l-2;\\
m_{p+1,i} \leq   m_{p,i}-2n_{p,i} \  \text{if} \ n_{p+1,i}=n_{p,i}, \ 1\leq  p\leq r_{i}^{(1)}-1, \ 1 \leq i \leq l-2;\\
m_{p,l-1} \in  \frac{1}{2}\mathbb{Z}, \ \ \  \ 1\leq  p\leq r_{l-1}^{(1)};\\
m_{p,l-1}\leq  -\frac{n_{p,l-1}}{2} + \sum_{q=1}^{r_{l-2}^{(1)}}\text{min}\left\{ n_{q,l-2},n_{p,l-1}\right\} - \sum_{p>p'>0}  \ \text{min}\{n_{p,l-1}, n_{p',l-1}\}, \  1\leq  p\leq r_{l-1}^{(1)};\\
m_{p+1,l-1}\leq  m_{p,l-1}-n_{p,l-1} \  \text{if} \ n_{p,l-1}=n_{p+1,l-1}, \  1\leq  p\leq r_{l-1}^{(1)}-1  
\end{array}\right\},
\end{align}
\item for $E_6^{(2)}$:
\begin{equation*} 
B= \bigcup_{\substack{n_{r_{1}^{(1)},1}\leq \ldots \leq n_{1,1}\leq 
k\\\substack{ \ldots \\ n_{r_{4}^{(1)},4}\leq \ldots \leq n_{1,4}\leq k}}}\left(\text{or, equivalently,} \ \ \ 
\bigcup_{\substack{r_{1}^{(1)}\geq \cdots\geq r_{1}^{(k)}\geq 0\\\substack{ \ldots \\ r_{4}^{(1)}\geq \cdots\geq r_{4}^{(k)}\geq 
0}}}\right)
\end{equation*}
\begin{equation*}
\left\{b\right.= b(\alpha_{4})\cdots b(\alpha_{1})
=x^{\hnu}_{n_{r_{4}^{(1)},4}\alpha_{4}}(m_{r_{4}^{(1)},4})\cdots x^{\hnu}_{n_{1,4}\alpha_{4}}(m_{1,4})\cdots x^{\hnu}_{n_{r_{1}^{(1)},1}\alpha_{1}}(m_{r_{1}^{(1)},1})\cdots  x^{\hnu}_{n_{1,1}\alpha_{1}}(m_{1,1}):
\end{equation*}
\begin{align}\nonumber
\left|
\begin{array}{l}
m_{p,i} \in \frac{1}{2}\mathbb{Z}, \ \ \  \ 1\leq  p\leq r_{i}^{(1)}, \ 1 \leq i \leq 2;\\
m_{p,i}\leq  -\frac{n_{p,i}}{2} + \frac{1}{2} \sum_{q=1}^{r_{i-1}^{(1)}}\text{min}\left\{n_{q,i-1},n_{p,i}\right\}- \sum_{p>p'>0} 
 \ \text{min}\{n_{p,i}, n_{p',i}\}, \ 1\leq  p\leq r_{i}^{(1)}, \ 1 \leq i \leq n-1;\\
m_{p+1,i} \leq   m_{p,i}-n_{p,i} \  \text{if} \ n_{p+1,i}=n_{p,i}, \ 1\leq  p\leq r_{i}^{(1)}-1, \ 1 \leq i \leq 2;\\
m_{p,i} \in \mathbb{Z}, \ \ \  \ 1\leq  p\leq r_{i}^{(1)}, \ 3 \leq i \leq 4;\\
m_{p,i}\leq  -n_{p,i} + \sum_{q=1}^{r_{i-1}^{(1)}}\text{min}\left\{ n_{q,i-1},n_{p,i}\right\} - \sum_{p>p'>0} 2 \ \text{min}\{n_{p,i}, n_{p',i}\}, \  1\leq  p\leq r_{i}^{(1)} , \ 3 \leq i \leq 4;\\
m_{p+1,i}\leq  m_{p,i}-2n_{p,i} \  \text{if} \ n_{p,i}=n_{p+1,i}, \  1\leq  p\leq r_{i}^{(1)}-1, \ 3 \leq i \leq 4  
\end{array}\right\}.
\end{align}
\item for $D_4^{(3)}$:
\begin{equation*} 
B= \bigcup_{\substack{n_{r_{1}^{(1)},1}\leq \ldots \leq n_{1,1}\leq 
k\\\substack{   n_{r_{2}^{(1)},2}\leq \ldots \leq n_{1,2}\leq k}}}\left(\text{or, equivalently,} \ \ \ 
\bigcup_{\substack{r_{1}^{(1)}\geq \cdots\geq r_{1}^{(k)}\geq 0\\\substack{ r_{2}^{(1)}\geq \cdots\geq r_{2}^{(k)}\geq 
0}}}\right)
\end{equation*}
\begin{equation*}
\left\{b\right.= b(\alpha_{2}) b(\alpha_{1})
=x^{\hnu}_{n_{r_{2}^{(1)},2}\alpha_{2}}(m_{r_{2}^{(1)},2})\cdots x^{\hnu}_{n_{1,2}\alpha_{2}}(m_{1,2})  x^{\hnu}_{n_{r_{1}^{(1)},1}\alpha_{1}}(m_{r_{1}^{(1)},1})\cdots  x^{\hnu}_{n_{1,1}\alpha_{1}}(m_{1,1}):
\end{equation*}
\begin{align}\nonumber
\left|
\begin{array}{l}
m_{p,1} \in \frac{1}{3}\mathbb{Z}, \ \ \  \ 1\leq  p\leq r_{1}^{(1)};\\
m_{p,1}\leq  - \frac{n_{p,1}}{3} - \frac{2}{3}\sum_{p>p'>0} 
 \ \text{min}\{n_{p,1}, n_{p',1}\}, \ 1\leq  p\leq r_{1}^{(1)};\\
m_{p+1,1} \leq   m_{p,1}-\frac{2}{3}n_{p,1} \  \text{if} \ n_{p+1,1}=n_{p,1}, \ 1\leq  p\leq r_{1}^{(1)}-1;\\
m_{p,2} \in \mathbb{Z}, \ \ \  \ 1\leq  p\leq r_{2}^{(1)};\\
m_{p,2}\leq  -n_{p,2} + \sum_{q=1}^{r_{1}^{(1)}}\text{min}\left\{ n_{q,1},n_{p,2}\right\} - \sum_{p>p'>0} 2 \ \text{min}\{n_{p,2}, n_{p',2}\}, \  1\leq  p\leq r_{2}^{(1)};\\
m_{p+1,2}\leq  m_{p,2}-2n_{p,2} \  \text{if} \ n_{p,2}=n_{p+1,2}, \  1\leq  p\leq r_{2}^{(1)}-1
\end{array}\right\},
\end{align}
\end{itemize}
where $r^{(1)}_0 := 0$. 

Now, using the same proof as in \cite{G}, we have:
\begin{prop}
The set 
\begin{equation}\label{span}
\mathcal{B}=\{ bv_L: b \in B\}
\end{equation}
spans the principal subspace $W_{L_k}^T$.
\end{prop}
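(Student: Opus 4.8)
The plan is to follow Georgiev's strategy (as in \cite{G}), reducing an arbitrary vector of $W^T_{L_k}$ to a linear combination of elements of $\mathcal{B}$ by a double induction, first on charge-type and then on energy. First I would produce a crude monomial spanning set. By the lemma establishing $W^T_{L_k}=U\cdot v_L$, every vector of $W^T_{L_k}$ is a linear combination of vectors obtained by applying products of the charge-one operators $x^{\hnu}_{\alpha_i}(m)$, grouped by color in the fixed order dictated by $U$. Using the factorization $x^{\hnu}_{r\alpha_i}(z)=Y^{\hnu}(x_{\alpha_i}(-1){\bf 1},z)^r$ recalled from \cite{Li}, consecutive charge-one operators of the same color can be assembled into twisted quasi-particles of higher charge, so that $W^T_{L_k}$ is spanned by the monomial vectors $bv_L$ whose factors are ordered by color and, within each color, have non-decreasing charges read from right to left. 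Relation \eqref{rel1} lets me discard every monomial containing a factor of charge exceeding $k$, and the parity relations for $x^{\hnu}_{\nu\alpha}(m)$ together with \eqref{rel2}--\eqref{rel4} normalize the action on $v_L$ and place each energy $m_{p,i}$ in the correct coset of $\frac{1}{v}\mathbb{Z}$ according to whether $\hnu\alpha_i=\alpha_i$.

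Next I would fix a weight space of $W^T_{L_k}$, which is finite dimensional, and restrict attention to the finitely many monomials $bv_L$ of that weight. On these I impose the order $b<\bar{b}$ defined in the excerpt, comparing charge-types first and refining by the lexicographic comparison of energies within a fixed charge-type. The claim is that any monomial violating one of the defining inequalities of $B$ can be rewritten, modulo strictly smaller monomials in this order, as a linear combination of monomials already satisfying the conditions of $B$. For the within-color conditions --- the lower bounds on the individual energies $m_{p,i}$ and the difference conditions between equal-charge factors of the same color --- I would apply Lemma \ref{rel5}: it expresses a violating pair $x^{\hnu}_{n\alpha_i}(m)x^{\hnu}_{n'\alpha_i}(m')$ either in terms of lower monomials of the same charge-type or in terms of monomials carrying a factor of strictly larger charge $x^{\hnu}_{(n'+1)\alpha_i}(j')$. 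The cross-color bounds, which contain the sums $\sum_q \min\{n_{q,i-1},n_{p,i}\}$, come from relations \eqref{rel6}--\eqref{rel8}: multiplying by the appropriate power of $(z_1-z_2)$ or $(z_1^{1/2}-z_2^{1/2})$ and extracting coefficients lets me commute a quasi-particle of one color past its neighbour of the adjacent color at the cost of monomials whose energies have been pushed into the allowed range.

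The step I expect to be the main obstacle is establishing termination, i.e.\ showing that these two mechanisms interlock into a single well-founded induction. When a within-color relation is applied, the resulting higher-charge factor $x^{\hnu}_{(n'+1)\alpha_i}(j')$ alters the charge-type while preserving the color-type $(r_l,\dots,r_1)$; since all charges are bounded by $k$ and each $r_i$ is fixed inside a given weight space, there are only finitely many charge-types, and they can be ordered so that such substitutions strictly increase the charge-type. Within a fixed charge-type the reductions strictly decrease the monomial in the energy order, and this cannot continue indefinitely because finite-dimensionality of the weight space --- equivalently, the lower bound on energies forced by $x^{\hnu}_{r\alpha_i}(m)v_L=0$ for $m\geq 0$ --- bounds the energies below. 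Carefully verifying that every relation invoked respects this nested order, and in particular that the cross-color commutations in \eqref{rel6}--\eqref{rel8} never reintroduce a monomial that is simultaneously larger in both charge-type and energy, is the delicate bookkeeping that makes the argument go through. Once it is in place, descending induction shows that every monomial lies in the span of $\mathcal{B}$, so $\mathcal{B}$ spans $W^T_{L_k}$.
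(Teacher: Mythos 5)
Your proposal is correct and follows essentially the same route as the paper, whose proof of this proposition consists precisely of invoking the spanning argument of \cite{G}: a crude spanning set from $W_{L_k}^T=U\cdot v_L$, reduction modulo the relations \eqref{rel1}--\eqref{rel4}, Lemma \ref{rel5} and \eqref{rel6}--\eqref{rel8}, and termination via induction on the linear order on monomials within a fixed (finite-dimensional) weight space. The termination bookkeeping you single out as delicate is exactly the content of Georgiev's induction, so your write-up is a faithful expansion of the proof the paper cites.
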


\section{Proof of linear independence}
To prove that the set $\mathcal{B}$ is a linearly independent set, we will use certain maps defined on our principal subspace. The first of these maps is a generalization of the projection map found in \cite{G} to our twisted setting, and the remaining maps were used in \cite{PS1} and \cite{PS2}, (see also \cite{L1} and \cite{CalLM4}). The proof of the linear independence of $\mathcal{B}$ is carried out by induction on the linear order on the twisted quasi-particle monomials.
\subsection{The projection $\pi_{\mathcal{R}}$} As we mentioned earlier, for $k \geq 1$, we will realize $W_{L_k}^T$ as a subspace of the tensor product of $k$ level one modules $W_{L}^T$
\[ W_{L_k}^T \subset W_{L}^T \otimes \cdots \otimes W_{L}^T \subset {V_L^T} ^{\otimes k}.\]
For a chosen dual-charge-type
$$\mathcal{R}=\left(r^{(1)}_{l},\ldots , r^{(k)}_{l};\ldots ;r^{(1)}_{1},\ldots , r^{(k)}_{1} \right),$$
we define a projection $\pi_{\mathcal{R}}$ of $W_{L_k}^T$ on 
\[ W_{L_{(r_l^{(k)}, \ldots, r_1^{(k)})}}^T \otimes \cdots \otimes W_{L_{(r_l^{(1)}, \ldots, r_1^{(1)})}}^T,\]
where $W_{L_{(r_l^{(t)}, \ldots, r_1^{(t)})}}^T$ denotes the subspace of $W_{L}^T$ consisting of the vectors of charges $r_l^{(t)}, \ldots, r_1^{(t)}$, for $1 \leq t \leq k$ (see \ref{charge1}--\ref{charge4}). We will also consider a generalization of $\pi_{\mathcal{R}}$, which we denote by the same symbol, to the space of formal series with coefficients in ${W_{L}^T}^{\otimes k}$.

Using the level $1$ relation $x^{\hnu}_{2\alpha_i}(z)=0$, with the projection $\pi_{\mathcal{R}}$ for fixed $1 \leq p \leq r_{i}^{(1)}$,  we ``place'' $n_{p,i}$, generating functions $x^{\hnu}_{\alpha_i}(z)$ on first  $n_{p,i}$ tensor factors:
\[ x^{\hnu}_{n^{(k)}_{p,i}\alpha_{i}}(z_{p,i}){\bf 1}_T\otimes  x^{\hnu}_{n^{(k-1)}_{p,i}\alpha_{i}}(z_{p,i}){\bf 1}_T \otimes \cdots \otimes      x^{\hnu}_{n^{(2)}_{p,i}\alpha_{i}}(z_{p,i}){\bf 1}_T\otimes  x^{\hnu}_{n^{(1)}_{p,i}\alpha_{i}}(z_{p,i}){\bf 1}_T,
\] 
where
\[  0 \leq  n^{(t)}_{p,i} \leq 1, \ \  \  n_{p,i}=\sum_{t=1}^k n^{(t)}_{p,i}, \ \ \ 1 \leq t \leq k. \]
Now, it follows that the projection of generating function
\begin{equation}\label{gf1}
x^{\hnu}_{n_{r_{l}^{(1)},l}\alpha_{l}}(z_{r_{l}^{(1)},l})\cdots  x^{\hnu}_{n_{1,1}\alpha_{1}}(z_{1,1}) \ v_{L}
\end{equation}
of dual-charge-type $\mathcal{R}$ and corresponding charge-type  $\mathcal{R}'=\left(n_{r_{l}^{(1)},l}, \ldots ,n_{1,l};\ldots ;n_{r_{1}^{(1)},1}, \ldots ,n_{1,1}\right)$ is
\begin{align*}
\pi_{\mathcal{R}}& x^{\hnu}_{n_{r_{l}^{(1)},l}\alpha_{l}}(z_{r_{l}^{(1)},l})\cdots  x^{\hnu}_{n_{1,1}\alpha_{1}}(z_{1,1}) \ v_{L}&\\
\nonumber
=&\text{C} \ x^{\hnu}_{n^{(k)}_{r^{(k)}_{l},l}\alpha_{l}}(z_{r_{l}^{(k)},l})\cdots  x^{\hnu}_{n^{(k)}_{1,l}\alpha_{l}}(z_{1,l}) \cdots  x^{\hnu}_{n^{(k)}_{r^{(k)}_{1},1}\alpha_{1}}(z_{r_{1}^{(k)},1})\cdots   x^{\hnu}_{n^{(k)}_{1,1}\alpha_{1}}(z_{1,1}) \ {\bf 1}_T&\\
\nonumber
& \ \ \ \ \ \ \ \ \ \ \ \ \ \ \ \ \otimes \ldots \otimes&\\
\nonumber
\otimes &x^{\hnu}_{n_{r^{(1)}_{l},l}^{(1)}\alpha_{l}}(z_{r_{l}^{(1)},l})\cdots    x^{\hnu}_{n_{1,l}^{(1)}\alpha_{l}}(z_{1,l})\cdots x^{\hnu}_{n_{r^{(1)}_{1},1}^{(1)}\alpha_{1}}(z_{r_{1}^{(1)},1})\cdots     x^{\hnu}_{n{_{1,1}^{(1)}\alpha_{1}}}(z_{1,1}) \ {\bf 1}_T,&
\end{align*}
where $\text{C} \in \mathbb{C}^{*}$,
and 
\begin{equation*}
0 \leq  n^{(t)}_{p,i} \leq 1, \ \ \ 1 \leq t \leq k, \ \ \ n^{(1)}_{p,i}\geq n^{(2)}_{p,i}\geq \ldots \geq  n^{(k-1)}_{p,i}\geq  n^{(k)}_{p,i}, \ \ \  n_{p,i}=\sum_{t=1}^k n^{(t)}_{p,i},
\end{equation*}
for every every $p$, $1 \leq p \leq r_{i}^{(1)}$, $1 \leq i \leq l$.

Also, from above it follows that the projection of $bv_L$, where $b \in B$ is the monomial 
\begin{equation}\label{mgf}
x^{\hnu}_{n_{r_{l}^{(1)},l}\alpha_{l}}(m_{r_{l}^{(1)},l})\cdots  x^{\hnu}_{n_{1,1}\alpha_{1}}(m_{1,1})  
\end{equation}
of charge-type $\mathcal{R}'$ and dual-charge-type $\mathcal{R}$, is a coefficient of the generating function (\ref{gf1}), which we will denote with $\pi_{\mathcal{R}}bv_L$. 

\subsection{The maps $\Delta^{T}(\lambda,-z)$} 
Following \cite{CalLM4}, \cite{CalMPe}, and \cite{PS1}--\cite{PS2} for $\gamma\in\h_{(0)}$ and a character of the root lattice $\theta:L\to\CC$, define
\begin{equation*}\begin{aligned}
\ta{}:\overline{\mathfrak{n}}[\hnu]&\to\overline{\mathfrak{n}}[\hnu]\\
\xa{}{m}&\mapsto\theta_{}(\alpha)\xa{}{m+\left<\a_{(0)},\gamma\right>}.
\end{aligned}\end{equation*}

We note here that in general, $\tau_{\gamma, \theta}$ is a linear map, and for suitably chosen characters $\theta$, it becomes an automorphism of $\overline{\mathfrak{n}}[\hnu]$, which we extend to an automorphism of $\un$. In particular, we consider maps $\tau_{\gamma, \theta}$ where $\gamma = \gamma_i=(\lambda_{i})_{(0)} = \frac{1}{2}\left( \lambda_i + \nu \lambda_i \right)$, and where the corresponding characters $\theta = \theta_i$ are chosen as in \cite{PS1}--\cite{PS2} defined by
 \begin{equation*} \theta_i(\a_j)=(-1)^{\left<\lambda^{(i)},\a_j\right>},\end{equation*}
 where we choose our subscripts $i$ as follows:
 \begin{itemize}
\item for $A_{2l-1}$, let $i=1,\dots,l$
\item  for $D_l$ when $v=2$,  let $i=1,\dots,l-1$
\item for $D_4$ when $v=3$, let $i=1,2$
\item  $E_6$, let $i=1,2,3,4$.
\end{itemize}
and define $\lambda^{(i)} = v(\lambda_i)_{(0)}$ for $v=2,3$ (see \cite{PS2} and \cite{PSW} for a more general definition of this symbol).
 
We also consider the related map $\Delta_c(\lambda,x)$ from \cite{CalLM4}, \cite{CalMPe}, and \cite{PS1}--\cite{PS2}.\\
For $\lambda\in\{\lambda_i |1\leq i\leq l\}$, we define the map 
\begin{equation*}
 \Delta^{T}(\lambda,-z)=(-1)^{\nu \lambda}z^{\lambda_{(0)}}E^{+}(-\lambda,z).
 \end{equation*}
 and its constant term $\Delta_c^T(\lambda,-z)$. From \cite{PS1}--\cite{PS2}, we have
 \begin{equation}\label{2map2}
  \Delta_c^T(\lambda_i,-z)(\xbet{m} {\bf 1}_T) = \ta{i}(\xbet{m}){\bf 1}_T.
 \end{equation}
 More generally, we have linear maps
 \begin{equation*} 
\begin{aligned}
 \Delta_c^T(\lambda_i,-z):W_L^T&\to W_L^T\\
 a\cdot {\bf 1}_T&\mapsto \ta{i}(a)\cdot {\bf 1}_T,
 \end{aligned}\end{equation*}
 where $a\in\un$.
 
 Fix $s \leq k$ and consider the map 
 \begin{equation*} 
1\otimes\cdots \otimes  \Delta_c^T(\lambda_i,-z) \otimes \underbrace{{ 1} \otimes \cdots \otimes { 1}}_{s-1 \ \text{factors}}.
 \end{equation*}
 Let $b \in B$ as in (\ref{mgf}).  It follows that 
\begin{equation*}({ 1}\otimes \cdots  \otimes { 1} \otimes  \Delta_c^T(\lambda_i,-z) \otimes { 1} \otimes \cdots \otimes { 1})\pi_{\mathcal{R}}bv_{L}
\end{equation*}
is the coefficient of  
\begin{equation*}
({ 1}\otimes\cdots \otimes \Delta_c^T(\lambda_i,-z)  \otimes { 1} \otimes \cdots \otimes  { 1})\pi_{\mathcal{R}}x^{\hnu}_{n_{r_{l}^{(1)},l}\alpha_{l}}(z_{r_{l}^{(1)},l})\cdots x^{\hnu}_{s\alpha_{1}}(z_{1,1})v_{L},
\end{equation*}
where, from (\ref{2map2}), it follows that operator $\Delta_c^T(\lambda_i,-z)$ acts only on the $s$-th tensor row as: 
\begin{equation*}\otimes  x^{\hnu}_{n^{(s)}_{r^{(s)}_{l},l}\alpha_{l}}(z_{r_{l}^{(s)},l})\cdots    x^{\hnu}_{n^{(s)}_{1,l}\alpha_{l}}(z_{1,l})\cdots x^{\hnu}_{n^{(s)}_{r^{(s)}_{i},i}\alpha_{i}}(z_{r_{i}^{(s)},i})z_{r_{i}^{(s)},i}\cdots    x^{\hnu}_{n^{(s)}_{1,i}\alpha_{l}}(z_{1,i})z_{1,i}\cdots\end{equation*}
\begin{equation*}\cdots x^{\hnu}_{n^{(s)}_{r^{(s)}_{1},1}\alpha_{1}}(z_{r_{1}^{(s)},1})\cdots  x^{\hnu}_{\alpha_{1}}(z_{1,1}){\bf 1}_T\otimes,\end{equation*}
for $i$ such that $\nu \alpha_i=\alpha_i$ and as 
\begin{equation*}\otimes  x^{\hnu}_{n^{(s)}_{r^{(s)}_{l},l}\alpha_{l}}(z_{r_{l}^{(s)},l})\cdots    x^{\hnu}_{n^{(s)}_{1,l}\alpha_{l}}(z_{1,l})\cdots x^{\hnu}_{n^{(s)}_{r^{(s)}_{i},i}\alpha_{i}}(z_{r_{i}^{(s)},i})z^{\frac{1}{v}}_{r_{i}^{(s)},i}\cdots    x^{\hnu}_{n^{(s)}_{1,i}\alpha_{l}}(z_{1,i})z^{\frac{1}{v}}_{1,i}\cdots\end{equation*}
\begin{equation*}\cdots x^{\hnu}_{n^{(s)}_{r^{(s)}_{1},1}\alpha_{1}}(z_{r_{1}^{(s)},1})\cdots  x^{\hnu}_{\alpha_{1}}(z_{1,1}){\bf 1}_T\otimes,\end{equation*}
if $i$ is such that $\nu \alpha_i\neq \alpha_i$, where $ 0 \leq  n^{(s)}_{p,i} \leq 1$, for $1\leq  p \leq  r^{(s)}_{i}$.
By taking the corresponding coefficients, we have
\begin{equation*} 
({1}\otimes \cdots  \otimes {1} \otimes  \Delta_c^T(\lambda_i,-z) \otimes { 1} \otimes \cdots \otimes { 1})\pi_{\mathcal{R}}bv_{L}=\pi_{\mathcal{R}}b^+v_{L},
\end{equation*}
where 
\begin{equation*} 
b^+=b(\alpha_l)\cdots b(\alpha_{i+1}) b^+(\alpha_{i}) b(\alpha_{i-1})\cdots  b(\alpha_{1}),
\end{equation*}
with
\begin{equation*}
b^+(\alpha_{i}) =x^{\hnu}_{n_{r_{i}^{(1)},i}\alpha_{i}}(m_{r_{i}^{(1)},i}+1)\cdots  x^{\hnu}_{n_{1,i}\alpha_{i}}(m_{1,i}+1)  
\end{equation*}
if $\nu \alpha_i=\alpha_i$ and with
\begin{equation*}
b^+(\alpha_{i}) =x^{\hnu}_{n_{r_{i}^{(1)},i}\alpha_{i}}(m_{r_{i}^{(1)},i}+\frac{1}{v})\cdots  x^{\hnu}_{n_{1,i}\alpha_{i}}(m_{1,i}+\frac{1}{v})  
\end{equation*}
if $\nu \alpha_i\neq\alpha_i$.

\subsection{The maps $e_{\alpha_i}$}
Finally, we recall the maps $e_{\alpha_i}$, which satisfy
 \begin{equation*}\begin{aligned}
 e_{\a_i}&:V_L^T\to V_L^T
  \end{aligned}\end{equation*}
  and their restriction to the principal subspace $W_L^T\subset V_L^T$ where 
  \begin{equation*}\begin{aligned}
  e_{\a_i}\cdot 1&=\frac{2}{\sigma(\a_i)}\xa{i}{-1}\cdot {\bf 1}_T \text{ if } \nu \a_i = \a_i\\
    e_{\a_i}\cdot 1&=\frac{2}{\sigma(\a_i)}\xa{i}{-\frac{1}{2}}\cdot {\bf 1}_T\text{ if } \nu \a_i \neq\a_i,
 \end{aligned}\end{equation*}
 when $v=2$ and 
 \begin{equation*}\begin{aligned}
  e_{\a_1}\cdot 1&=\frac{3}{\sigma(\a_1)}\xa{1}{-\frac{1}{3}}\cdot {\bf 1}_T\\
  e_{\a_2}\cdot 1&=\frac{3}{\sigma(\a_2)}\xa{2}{-1}\cdot {\bf 1}_T
 \end{aligned}\end{equation*}
 if $v=3$,
 and commute with our operators $\xbet{n}$ via
\begin{equation*}
 e_{\a_i}\xbet{n}=C(\a_i,\b)\xbet{n-\left<\beta_{(0)},\a_i\right>}e_{\a_i}.
 \end{equation*}

Now, assume that we have a monomial
 \begin{equation*} 
b=b(\alpha_l)\cdots b(\alpha_1) x^{\hnu}_{s\alpha_1}(-\frac{s}{2}) \in B\end{equation*}
\begin{equation*} 
b=x^{\hnu}_{n_{r_{l}^{(1)},l}\alpha_{l}}(m_{r_{l}^{(1)},l})\cdots 
\cdots  x^{\hnu}_{n_{r_{1}^{(1)},1}\alpha_{1}}(m_{r_{1}^{(1)},1})  \cdots x^{\hnu}_{n_{2,1}\alpha_{1}}(m_{2,1})x^{\hnu}_{s\alpha_{1}}(-\frac{s}{2}), \end{equation*}
of dual-charge-type
\[ \mathcal{R}=\left(r^{(1)}_{l},\ldots , r^{(k)}_{l};\ldots ;r^{(1)}_{1},\ldots , r^{(s)}_{1},0, \ldots, 0 \right) \]
and a projection $\pi_{\mathcal{R}}bv_L$, which is a coefficient of
\begin{align*}
\label{projection1}
\pi_{\mathcal{R}}& x^{\hnu}_{n_{r_{l}^{(1)},l}\alpha_{l}}(z_{r_{l}^{(1)},l})\cdots  x^{\hnu}_{n_{2,1}\alpha_{1}}(z_{1,1}) \ {\bf 1}_T\otimes\cdots \otimes {\bf 1}_T \otimes x^{\hnu}_{\alpha_{1}}(-\frac{1}{2}) {\bf 1}_T\otimes \cdots \otimes x^{\hnu}_{\alpha_{1}}(-\frac{1}{2}) {\bf 1}_T&\\
\nonumber
=&\text{C} \ x^{\hnu}_{n^{(k)}_{r^{(k)}_{l},l}\alpha_{l}}(z_{r_{l}^{(k)},l})\cdots  x^{\hnu}_{n^{(k)}_{1,l}\alpha_{l}}(z_{1,l}) \cdots  x^{\hnu}_{n^{(k)}_{r^{(k)}_{1},1}\alpha_{1}}(z_{r_{1}^{(k)},1})\cdots   x^{\hnu}_{n^{(k)}_{2,1}\alpha_{1}}(z_{1,1}) \ {\bf 1}_T&\\
\nonumber
& \ \ \ \ \ \ \ \ \ \ \ \ \ \ \ \ \otimes \ldots \otimes&\\
\nonumber
&\ x^{\hnu}_{n^{(s+1)}_{r^{(s+1)}_{l},l}\alpha_{l}}(z_{r_{l}^{(s+1)},l})\cdots  x^{\hnu}_{n^{(s+1)}_{1,l}\alpha_{l}}(z_{1,l}) \cdots  x^{\hnu}_{n^{(s+1)}_{r^{(s+1)}_{1},1}\alpha_{1}}(z_{r_{1}^{(s+1)},1})\cdots   x^{\hnu}_{n^{(s+1)}_{2,1}\alpha_{1}}(z_{2,1}) \ {\bf 1}_T&\\
\nonumber
&\ x^{\hnu}_{n^{(s)}_{r^{(s)}_{l},l}\alpha_{l}}(z_{r_{l}^{(s)},l})\cdots  x^{\hnu}_{n^{(s)}_{1,l}\alpha_{l}}(z_{1,l}) \cdots  x^{\hnu}_{n^{(s)}_{r^{(s)}_{1},1}\alpha_{1}}(z_{r_{1}^{(s)},1})\cdots   x^{\hnu}_{n^{(s)}_{2,1}\alpha_{1}}(z_{2,1}) \ e_{\a_1}{\bf 1}_T &\\
\nonumber
& \ \ \ \ \ \ \ \ \ \ \ \ \ \ \ \ \otimes \ldots \otimes&\\
\nonumber
\otimes &x^{\hnu}_{n_{r^{(1)}_{l},l}^{(1)}\alpha_{l}}(z_{r_{l}^{(1)},l})\cdots    x^{\hnu}_{n_{1,l}^{(1)}\alpha_{l}}(z_{1,l})\cdots x^{\hnu}_{n_{r^{(1)}_{1},1}^{(1)}\alpha_{1}}(z_{r_{1}^{(1)},1})\cdots     x^{\hnu}_{n{_{2,1}^{(1)}\alpha_{1}}}(z_{2,1}) \ e_{\a_1}{\bf 1}_T ,&
\end{align*} 
where $\text{C} \in \mathbb{C}^{*}$. 

If we move the operator $ 1\otimes\cdots \otimes 1 \otimes \underbrace{e_{\a_1} \otimes \cdots \otimes e_{\a_1}}_{s \ \text{factors}}$ all the way to the left we will get a projection $\pi_{\mathcal{R^{-}}}b'v_L$, where
\[ \mathcal{R}^{-}=\left(r^{(1)}_{l},\ldots , r^{(k)}_{l};\ldots ;r^{(1)}_{1},\ldots , r^{(s)}_{1}-1,0, \ldots, 0 \right) \]
and 
\begin{equation*} 
b'=b(\alpha_l)\cdots b'(\alpha_{2}) b'(\alpha_{1}),
\end{equation*}
with
\begin{equation*} 
b'(\alpha_{1}) =x^{\hnu}_{n_{r_{1}^{(1)},1}\alpha_{1}}(m_{r_{1}^{(1)},1}+n^{(1)}_{r_{1}^{(1)},1}+\cdots +n^{(s)}_{r_{1}^{(1)},1})\cdots  x^{\hnu}_{n_{2,1}\alpha_{1}}(m_{2,1}+n^{(1)}_{2,1}+\cdots +n^{(s)}_{2,1})  
\end{equation*}
and
\begin{equation*} 
b'(\alpha_{2}) =x^{\hnu}_{n_{r_{2}^{(1)},2}\alpha_{2}}(m_{r_{2}^{(1)},2}-\frac{n^{(1)}_{r_{2}^{(1)},2}+\cdots +n^{(s)}_{r_{2}^{(1)},2}}{2})\cdots  x^{\hnu}_{n_{1,2}\alpha_{j}}(m_{1,2}-\frac{n^{(1)}_{1,2}+\cdots +n^{(s)}_{1,2}}{2}).  
\end{equation*}
Above we only considered the case when $v=2$ and $\nu \alpha_1 \neq \alpha_1$, but note here that the remaining cases are similar.

\subsection{A proof of linear independence}
Here we will prove our main theorem:
\begin{thm}
 The set 
\begin{equation*} 
\mathcal{B}=\{ bv_L: b \in B\}
\end{equation*}
is a basis of the principal subspace $W_{L_k}^T$.
 \end{thm}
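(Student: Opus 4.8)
Since the Proposition above already establishes that $\mathcal{B}$ spans $W_{L_k}^T$, it remains only to prove that $\mathcal{B}$ is linearly independent, and the plan is to do this by induction on the total charge together with the linear order on twisted quasi-particle monomials, using the three families of maps constructed in the preceding subsections. Suppose
\[
\sum_{b \in B} c_b\, b v_L = 0, \qquad c_b \in \CC,
\]
with only finitely many $c_b \neq 0$; the goal is to show every $c_b = 0$. The base of the charge induction is charge zero, where the only monomial is the identity and $v_L \neq 0$, while the analysis of the individual tensor factors throughout rests on the level one situation, i.e. the linear independence of the analogous quasi-particle vectors in $W_L^T \cong V_L^T$ together with the level one relation $x^{\hnu}_{2\alpha_i}(z) = 0$ as in \cite{PS1}--\cite{PS2}.

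First I would fix a dual-charge-type $\mathcal{R}$ and apply the projection $\pi_{\mathcal{R}}$ to the relation. Since $\pi_{\mathcal{R}}$ annihilates every $bv_L$ whose dual-charge-type differs from $\mathcal{R}$ and carries each $bv_L$ of dual-charge-type $\mathcal{R}$ to a nonzero multiple of a distinguished vector in $W^T_{L_{(r_l^{(k)},\dots,r_1^{(k)})}} \otimes \cdots \otimes W^T_{L_{(r_l^{(1)},\dots,r_1^{(1)})}}$, it suffices to prove, for each fixed $\mathcal{R}$, that the vectors $\{\pi_{\mathcal{R}}\, b v_L\}$ with $b$ of dual-charge-type $\mathcal{R}$ are linearly independent. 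This reduces everything to a single charge-type $\mathcal{R}'$, in which the monomials differ only through their energies $m_{p,i}$, subject to the difference conditions defining $B$.

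Working within this fixed charge-type, I would sweep through the colors $i = 1, 2, \dots$ in order and, for each color, treat the quasi-particles by decreasing charge. The shift maps $\Delta_c^T(\lambda_i,-z)$ translate, rigidly and invertibly, all energies of a fixed color on a chosen tensor factor by $+1$ (resp. $+\tfrac{1}{v}$), and so give order-preserving bijections on monomials; composing them I would normalize the minimal monomial $b_0$ (in the linear order) so that its rightmost, lowest-color quasi-particle $x^{\hnu}_{s\alpha_1}(m_{1,1})$ sits at the extremal energy $-\tfrac{s}{2}$ (resp. $-\tfrac{s}{v}$), as in the explicit computation preceding this theorem. In that extremal configuration the projection $\pi_{\mathcal{R}} b_0 v_L$ carries, on each of the bottom $s$ tensor factors, a factor $x^{\hnu}_{\alpha_1}(-\tfrac12)\mathbf{1}_T = \tfrac{\sigma(\alpha_1)}{2}\, e_{\alpha_1}\mathbf{1}_T$. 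Commuting the operator $1 \otimes \cdots \otimes e_{\alpha_1}^{\otimes s}$ to the left by means of $e_{\alpha_i}\xbet{n}=C(\alpha_i,\beta)\xbet{n-\langle\beta_{(0)},\alpha_i\rangle}e_{\alpha_i}$ (all $C(\alpha_i,\beta) \neq 0$), I obtain
\[
\pi_{\mathcal{R}}\, b_0 v_L = C'\,\big(1 \otimes \cdots \otimes e_{\alpha_1}^{\otimes s}\big)\, \pi_{\mathcal{R}^-}\, b_0' v_L, \qquad C' \in \CC^{\times},
\]
where $b_0' \in B$ has strictly smaller total charge. Since $1 \otimes \cdots \otimes e_{\alpha_1}^{\otimes s}$ is injective, the induction hypothesis on charge applied to the family $\{\pi_{\mathcal{R}^-} b' v_L\}$ forces $c_{b_0} = 0$; iterating this over the linear order (the inner induction on energies) and then over the remaining colors annihilates all coefficients.

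The main obstacle I anticipate is the energy bookkeeping: one must verify that each application of a $\Delta_c^T(\lambda_i,-z)$ shift and of the peeling map $1 \otimes \cdots \otimes e_{\alpha_1}^{\otimes s}$ sends $B$ exactly onto the set of monomials of the reduced dual-charge-type $\mathcal{R}^-$, respecting all difference conditions (including the cross-color $\min$-terms) so that the correspondence is an order-preserving bijection and both inductions genuinely close. A secondary point requiring care is treating the four cases $A_{2l-1}^{(2)}$, $D_l^{(2)}$, $E_6^{(2)}$, $D_4^{(3)}$ uniformly, since the root lengths and the values $\langle \nu\alpha_i, \alpha_j\rangle$ — which determine whether one shifts by $1$ or $\tfrac{1}{v}$ and which commutation relation among quasi-particles of different colors applies — vary from color to color.
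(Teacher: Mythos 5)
Your proposal follows essentially the same route as the paper's proof: apply the projection $\pi_{\mathcal{R}}$, normalize energies with powers of $1\otimes\cdots\otimes\Delta_c^T(\lambda_1,-z)\otimes 1\otimes\cdots\otimes 1$ so that the extremal color-$1$ quasi-particle has energy $-\frac{s}{v}$, peel it off via the identity $x^{\hnu}_{\alpha_1}(-\frac{1}{v})\mathbf{1}_T=\frac{\sigma(\alpha_1)}{v}e_{\alpha_1}\mathbf{1}_T$ and the injectivity of $1\otimes\cdots\otimes 1\otimes e_{\alpha_1}\otimes\cdots\otimes e_{\alpha_1}$, and induct; the energy bookkeeping you flag as the main obstacle is precisely what the paper carries out explicitly, verifying the inequalities for $m'_{p,1}$ and $m'_{p,2}$ in the cases $v=2$, $v=3$, and $\nu\alpha_1=\alpha_1$, so the reduced monomial again lies in $B$.

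One misstatement deserves correction. It is not true that $\pi_{\mathcal{R}}$ annihilates every monomial vector whose dual-charge-type differs from $\mathcal{R}$: the projection kills only monomials of strictly \emph{greater} charge-type. For instance, at $k=2$ the vector $x^{\hnu}_{\alpha_i}(m)x^{\hnu}_{\alpha_i}(m')v_L$, of dual-charge-type $(2,0)$ in color $i$, has a nonzero component with charge one on each tensor factor, so it survives the projection attached to the larger monomial $x^{\hnu}_{2\alpha_i}(M)$, whose dual-charge-type is $(1,1)$. Consequently your clean reduction ``it suffices to treat each fixed $\mathcal{R}$ separately'' is not valid as stated; one must instead proceed along the total order, applying the projection attached to the \emph{minimal} monomial $b_0$ in the linear combination (which annihilates all strictly larger charge-types), and then use the shift-and-peel step to dispose of the same-charge-type monomials with larger energies. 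Relatedly, the claim that $\pi_{\mathcal{R}}$ carries $b_0v_L$ to a ``nonzero multiple of a distinguished vector'' presumes what must be proved: the nonvanishing $\pi_{\mathcal{R}}b_0v_L\neq 0$ is established in the paper by iterating the same peeling descent all the way down to $v_L\neq 0$. Since your inductive step already singles out the minimal monomial and inducts on total charge, both repairs are exactly what you do in practice, and with them your argument coincides with the paper's.
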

\begin{proof}
By Proposition \ref{span}, the set of monomial vectors $\mathcal{B}$ spans $W_{L_k}^T$, and so it remains to show that $\mathcal{B}$ is linearly independent. To prove linear independence first assume that we have
\[bv_L=0\]
where
\[b=x^{\hnu}_{n_{r_{l}^{(1)},l}\alpha_{l}}(m_{r_{l}^{(1)},l})\cdots x^{\hnu}_{n_{1,l}\alpha_{l}}(m_{1,l})\cdots  x^{\hnu}_{n_{r_{1}^{(1)},1}\alpha_{1}}(m_{r_{1}^{(1)},1})  \cdots x^{\hnu}_{n_{1,1}\alpha_{1}}(m_{1,1})  \in B,
\]
of charge-type
\[\mathcal{R}'=\left(n_{r_{l}^{(1)},l}, \ldots ,n_{1,l};\ldots ;n_{r_{1}^{(1)},1}, \ldots ,n_{1,1}\right)
\]
and dual-charge-type
\[ \mathcal{R}=\left(r^{(1)}_{l},\ldots , r^{(k)}_{l};\ldots ;r^{(1)}_{1},\ldots , r^{(n_{1,1})}_{1} \right), \]
which determines the projection $\pi_{\mathcal{R}}$, so that we have
\begin{equation}\label{p0}
\pi_{\mathcal{R}}bv_L=0.
\end{equation}
We will assume that $\nu \alpha_1\neq\alpha_1$ and we will let $s= n_{1,1}$.
We apply $1\otimes\cdots \otimes  \Delta_c^T(\lambda_1,-z)^{d} \otimes \underbrace{1 \otimes \cdots \otimes 1}_{s-1 \ \text{factors}}$ to (\ref{p0}), where $\Delta_c^T(\lambda_1,-z)^{d}=\underbrace{\Delta_c^T(\lambda_1,-z) \circ \cdots \circ \Delta_c^T(\lambda_1,-z)}_{d \ \text{times}}$ and where $d \in \mathbb{N}$ is selected so that after application of this map the twisted quasi-particle of color 1 and charge $s$ has energy $-\frac{s}{v}$. From the considerations in Subsection 5.2, we have
\begin{equation}\label{p1}
\pi_{\mathcal{R}}x^{\hnu}_{n_{r_{l}^{(1)},l}\alpha_{l}}(m_{r_{l}^{(1)},l})\cdots x^{\hnu}_{n_{1,l}\alpha_{l}}(m_{1,l})\cdots  x^{\hnu}_{n_{r_{1}^{(1)},1}\alpha_{1}}(m^+_{r_{1}^{(1)},1} )  \cdots\end{equation}
\begin{equation*} \cdots x^{\hnu}_{n_{2,1}\alpha_{1}}(m^+_{2,1} )\left({\bf 1}_T\otimes\cdots \otimes  {\bf 1}_T  \otimes x^{\hnu}_{\alpha_{1}}(-\frac{1}{v}){\bf 1}_T  \otimes \cdots \otimes x^{\hnu}_{\alpha_{1}}(-\frac{1}{v}){\bf 1}_T\right) =0,
\end{equation*}
which is a projection of a monomial vector from $\mathcal{B}$. 
From (\ref{p1}) it follows that
\begin{equation*} 
(1\otimes\cdots \otimes 1 \otimes  e_{\a_1} \otimes \cdots \otimes e_{\a_1}) \pi_{\mathcal{R}^-}x^{\hnu}_{n_{r_{l}^{(1)},l}\alpha_{l}}(m_{r_{l}^{(1)},l})\cdots x^{\hnu}_{n_{1,l}\alpha_{l}}(m_{1,l})\cdots 
\end{equation*}
\begin{equation*}
\cdots x^{\hnu}_{n_{r_{2}^{(1)},2}\alpha_{2}}(m'_{r_{2}^{(1)},2} ) x^{\hnu}_{n_{1,2}\alpha_{2}}(m'_{1,2}) x^{\hnu}_{n_{r_{1}^{(1)},1}\alpha_{1}}(m'_{r_{1}^{(1)},1} ) x^{\hnu}_{n_{2,1}\alpha_{1}}(m'_{2,1})v_L =0,
\end{equation*}
where
\[ \mathcal{R}^-=\left(r^{(1)}_{l},\ldots , r^{(k)}_{l};\ldots ;r^{(1)}_{1},\ldots , r^{(s)}_{1}-1 \right). \]
By injectivity of $1\otimes\cdots \otimes 1 \otimes  e_{\a_1} \otimes \cdots \otimes e_{\a_1}$, we have 
\begin{equation}\label{p3}
\pi_{\mathcal{R}^-}x^{\hnu}_{n_{r_{l}^{(1)},l}\alpha_{l}}(m_{r_{l}^{(1)},l})\cdots x^{\hnu}_{n_{1,l}\alpha_{l}}(m_{1,l})\cdots 
\end{equation}
\begin{equation*}
\cdots x^{\hnu}_{n_{r_{2}^{(1)},2}\alpha_{2}}(m'_{r_{2}^{(1)},2} ) x^{\hnu}_{n_{1,2}\alpha_{2}}(m'_{1,2}) x^{\hnu}_{n_{r_{1}^{(1)},1}\alpha_{1}}(m'_{r_{1}^{(1)},1} ) x^{\hnu}_{n_{2,1}\alpha_{1}}(m'_{2,1})v_L =0.
\end{equation*}
If $v=2$, for every $2 \leq p \leq r_1^{(1)}$ such that $n_{p,1}=s'\leq s$ we have
\[ m'_{p,1}=m^+_{p,1}+s'\leq  - \frac{n_{p,1}}{2}  -  \sum_{p>p'>0}  \ \text{min}\{n_{p,1}, n_{p',1}\} +s';\]
\[ m'_{p+1,1}=m^+_{p,1}+s'\leq   m^+_{p,1}-n_{p,1}+s'=m'_{p,1}-n_{p,1} \  \text{if} \ n_{p+1,1}=n_{p,1};\]
\[ m'_{p,2}=m^+_{p,2}-\frac{s'}{2}\leq  - \frac{n_{p,2}}{2} + \frac{1}{2} \sum_{q=1}^{r_{1}^{(1)}}\text{min}\left\{n_{q,1},n_{p,2}\right\} - \sum_{p>p'>0}  \ \text{min}\{n_{p,1}, n_{p',1}\}-\frac{s'}{2};\]
\[ m'_{p+1,2}=m^+_{p,2}-\frac{s'}{2} \leq   m^+_{p,2}-n_{p,2}-\frac{s'}{2}=m'_{p,2}-n_{p,2} \  \text{if} \ n_{p+1,2}=n_{p,2}.\]
If $v=3$, for every $2 \leq p \leq r_1^{(1)}$ and for $n_{p,1}=s'\leq s$ we have 
\[ m'_{p,1}=m^+_{p,1}+\frac{2s'}{3}\leq  - \frac{n_{p,1}}{3}  - \frac{2}{3}\sum_{p>p'>0}  \ \text{min}\{n_{p,1}, n_{p',1}\}+\frac{2s'}{3};\]
\[ m'_{p+1,1}=m^+_{p,1}+\frac{2s'}{3} \leq   m^+_{p,1}-\frac{2}{3}n_{p,1}+\frac{2s'}{3}=m'_{p,1}-\frac{2}{3}n_{p,1} \  \text{if} \ n_{p+1,1}=n_{p,1};\]
\[ m'_{p,2}=m^+_{p,2}-s'\leq  - n_{p,2}  +  \sum_{q=1}^{r_{1}^{(1)}}\text{min}\left\{n_{q,1},n_{p,2}\right\} - \sum_{p>p'>0} 2 \ \text{min}\{n_{p,1}, n_{p',1}\}-s';\]
\[ m'_{p+1,2}=m^+_{p,2}-s' \leq   m^+_{p,2}-n_{p,2}-s'=m'_{p,2}-n_{p,2} \  \text{if} \ n_{p+1,2}=n_{p,2}.\]
This shows that in (\ref{p3}) we have the projection of a monomial vector from the set $\mathcal{B}$. In the case when $\nu \alpha_1=\alpha_1$, with the above procedure will end with a monomial vector as in (\ref{p3}), which is also from the set $\mathcal{B}$, since for every $2 \leq p \leq r_1^{(1)}$and for $n_{p,1}=s'\leq s$, we have
\[ m'_{p,1}=m^+_{p,1}+2s'\leq  - n_{p,1}  -  \sum_{p>p'>0} 2 \ \text{min}\{n_{p,1}, n_{p',1}\} +2s';\]
\[ m'_{p+1,1}=m^+_{p,1}+2s'\leq   m^+_{p,1}-2n_{p,1}+2s'=m'_{p,1}-2n_{p,1} \  \text{if} \ n_{p+1,1}=n_{p,1};\]
\[ m'_{p,2}=m^+_{p,2}-s'\leq  -  n_{p,2} +  \sum_{q=1}^{r_{1}^{(1)}}\text{min}\left\{n_{q,1},n_{p,2}\right\} - \sum_{p>p'>0} 2 \ \text{min}\{n_{p,1}, n_{p',1}\}-s';\]
\[ m'_{p+1,2}=m^+_{p,2}-s' \leq   m^+_{p,2}-2n_{p,2}-s'=m'_{p,2}-2n_{p,2} \  \text{if} \ n_{p+1,2}=n_{p,2}.\]
If we continue in this way, ``removing'' one by one twisted quasi-particles from the monomial $b$ and by checking in each step that monomial vectors are in the set $\mathcal{B}$, after finitely many steps we arrive at $v_L=0$, which is a contradiction.

Now, consider a linear linear combination of elements from $\mathcal{B}$ satisfying
\begin{equation}\label{p6}
\sum_{a\in A}c_ab_av_L=0,
\end{equation}
where $b_a \in B$ are monomials of the same color-type and $c_a \in \mathbb{C}$. We will assume that the monomial $b$ of charge-type $\mathcal{R}'$ and dual-charge-type $\mathcal{R}$ is the smallest monomial in (\ref{p6}) with respect to our linear ordering. Recall that the dual-charge-type $\mathcal{R}$ determines  the projection $\pi_{\mathcal{R}}$ and note that from the definition of the projection it follows that all monomial vectors $b_av_L$  in (\ref{p6}), with charge-type $\mathcal{R}'_{a}$ such that $\mathcal{R}'_{a}> \mathcal{R}' $ (see (\ref{lin1})), will be annihilated. So, after applying  $\pi_{\mathcal{R}}$ to (\ref{p6}), we will have
\begin{equation}\label{p7}
\sum_{a\in A}c_a\pi_{\mathcal{R}}b_av_L=0,
\end{equation}
where all monomial vectors are of the same color-charge-type. Now we apply the above-described procedure of using the maps $1\otimes\cdots \otimes  \Delta_c^T(\lambda_1,-z)^{d} \otimes  1 \otimes \cdots \otimes 1$ and $1\otimes\cdots \otimes 1 \otimes  e_{\a_1} \otimes \cdots \otimes e_{\a_1}$ to the smallest element $b$. During this procedure, all monomial vectors $b_a$ such that $b_a>b$ (see (\ref{lin1})) will be annihilated. From (\ref{p7}) now we have $\pi_{\mathcal{R}}c_abv_L=0$, which then implies $c_a=0$. Repeating this procedure, after finitely many steps we will get that all coefficients $c_a$ of (\ref{p6}) are zero, which proves the theorem.
\end{proof}

\section{Characters of principal subspaces}
We define character of principal subspace $W_L^T$ by
\[\text{ch} W_{L_k}^T= \sum_{m,r_1,\ldots, r_l\geq 0} 
\text{dim} \ {W_{L_k}^T}_{(m,r_1,\ldots, r_l)}q^{m}y^{r_1}_{1}\cdots y^{r_l}_{l},\]
where ${W_{L_k}^T}_{(m,r_1,\ldots, r_l)}$ is a weight subspace spanned by monomial vectors of
weight $-m$ and color-type $(r_1,\ldots, r_l)$. 

By rewriting conditions on energies of twisted quasi-particles of a base $\mathcal{B}$ in
terms of the dual-charge-type (and the corresponding charge-type) 
for $\nu \alpha_i=\alpha_i$ we have 
\begin{align*} 
\sum_{p=1}^{r^{(1)}_{i}}\sum_{q=1}^{r^{(1)}_{i-1}}\mathrm{min}\{n_{p,i},n_{q,{i-1}}\}&=\sum_{s=1}^{k}r^{(s)}_{i-1}r_{i}^{(s)},&\\
\nonumber
\sum_{p=1}^{r_{i}^{(1)}} (\sum_{p>p'>0}2\mathrm{min} \{ n_{p,i},
n_{p',i}\}+n_{p,i})&= \sum_{s=1}^{k}r^{(s)^{2}}_{i}, &
\end{align*}
for $\nu \alpha_i \neq\alpha_i$, $v=2$, we have 
\begin{align*} 
\sum_{p=1}^{r^{(1)}_{i}}\frac{1}{2}\sum_{q=1}^{r^{(1)}_{i-1}}\mathrm{min}\{n_{p,i},n_{q,{i-1}}\}&=\frac{1}{2}\sum_{s=1}^{k}r^{(s)}_{i-1}r_{i}^{(s)},&\\
\nonumber
\sum_{p=1}^{r_{i}^{(1)}} (\sum_{p>p'>0}\mathrm{min} \{ n_{p,i},
n_{p',i}\}+\frac{1}{2}n_{p,i})&=\frac{1}{2} \sum_{s=1}^{k}r^{(s)^{2}}_{i}, & 
\end{align*}
and for $\nu \alpha_i \neq\alpha_i$, $v=3$
\begin{align*} 
\sum_{p=1}^{r_{i}^{(1)}} (\frac{2}{3}\sum_{p>p'>0}\mathrm{min} \{ n_{p,i},
n_{p',i}\}+\frac{1}{3}n_{p,i})&=\frac{1}{3} \sum_{s=1}^{k}r^{(s)^{2}}_{i}. &
\end{align*}
Now, we have:
\begin{thm}
For each of the affine Lie algebras $A_{2l-1}^{(2)}, D_l^{(2)}, E_6^{(2)},$ and $D_4^{(3)}$, the principal subspace $W^T_{L_k}$ of $L^{\hnu}(k\Lambda_0)$ has multigraded dimension given by:
\begin{itemize}
\item for $A_{2l-1}$:
\begin{align*} 
&\mathrm{ch} \  W^T_{L_k}&\\
\nonumber
= &\sum_{\substack{r_{1}^{(1)}\geq \cdots\geq r_{1}^{(k)}\geq 0\\ \cdots \\r_{l}^{(1)}\geq \cdots\geq r_{l}^{(k)}\geq 
0}}
\frac{q^{\frac{1}{2}\sum_{i=1}^{l-1}\sum_{s=1}^{k}r_i^{(s)^2}+\sum_{s=1}^{k}r_l^{(s)^2}-\frac{1}{2}\sum_{i=2}^{l-1}\sum_{s=1}^{k}r_{i-1}^{(s)}r_{i}^{(s)}-\sum_{s=1}^{k}r_{l-1}^{(s)}r_{l}^{(s)}}}
{\prod_{i=1}^{l-1}\left((q^{\frac{1}{2}};q^{\frac{1}{2}})_{r^{(1)}_{i}-r^{(2)}_{i}}\cdots (q^{\frac{1}{2}};q^{\frac{1}{2}})_{r^{(k)}_{i}}\right) (q)_{r^{(1)}_{l}-r^{(2)}_{l}}\cdots (q)_{r^{(k)}_{l}}}y^{r^{(1)}_1+\cdots +r^{(k)}_1}_{1} \cdots y^{r^{(1)}_l+\cdots +r^{(k)}_l}_{l}&
\end{align*}

\item  for $D_l$ when $v=2$:

\begin{align*} 
&\mathrm{ch} \  W^T_{L_k}&\\
\nonumber
= &\sum_{\substack{r_{1}^{(1)}\geq \cdots\geq r_{1}^{(k)}\geq 0\\ \cdots \\r_{l-1}^{(1)}\geq \cdots\geq r_{l-1}^{(k)}\geq 
0}}
\frac{q^{\sum_{i=1}^{l-2}\sum_{s=1}^{k}r_i^{(s)^2}+\frac{1}{2}\sum_{s=1}^{k}r_{l-1}^{(s)^2}-\sum_{i=2}^{l-1}\sum_{s=1}^{k}r_{i-1}^{(s)}r_{i}^{(s)}}}
{\prod_{i=1}^{l-2}\left((q)_{r^{(1)}_{i}-r^{(2)}_{i}}\cdots  (q)_{r^{(k)}_{i}}\right)(q^{\frac{1}{2}};q^{\frac{1}{2}})_{r^{(1)}_{l-1}-r^{(2)}_{l-1}}\cdots (q^{\frac{1}{2}};q^{\frac{1}{2}})_{r^{(k)}_{l-1}}}y^{r^{(1)}_1+\cdots +r^{(k)}_1}_{1} \cdots y^{r^{(1)}_{l-1}+\cdots +r^{(k)}_{l-1}}_{l-1}&
\end{align*}

\item  for $E_6$:

\begin{align*} 
&\mathrm{ch} \  W^T_{L_k}&\\
\nonumber
= &\sum_{\substack{r_{1}^{(1)}\geq \cdots\geq r_{1}^{(k)}\geq 0\\\substack{ \ldots \\ r_{4}^{(1)}\geq \cdots\geq r_{4}^{(k)}\geq 
0}}}
\frac{q^{\frac{1}{2}\sum_{i=1}^{2}\sum_{s=1}^{k}r_i^{(s)^2}+\sum_{i=3}^{4}\sum_{s=1}^{k}r_i^{(s)^2}-\sum_{s=1}^{k}(r_{1}^{(s)}r_{2}^{(s)}+r_{2}^{(s)}r_{3}^{(s)}+r_{3}^{(s)}r_{4}^{(s)})}}
{\prod_{i=1,2}\left((q^{\frac{1}{2}};q^{\frac{1}{2}})_{r^{(1)}_{i}-r^{(2)}_{i}}\cdots (q^{\frac{1}{2}};q^{\frac{1}{2}})_{r^{(k)}_{i}}\right)\left(\prod_{i=3,4} (q)_{r^{(1)}_{i}-r^{(2)}_{i}}\cdots (q)_{r^{(k)}_{i}}\right)}y^{r^{(1)}_1+\cdots +r^{(k)}_1}_{1} \cdots y^{r^{(1)}_4+\cdots +r^{(k)}_4}_{4}&
\end{align*}

\item for $D_4$ when $v=3$:

\begin{align*} 
&\mathrm{ch} \  W^T_{L_k}&\\
\nonumber
= &\sum_{\substack{r_{1}^{(1)}\geq \cdots\geq r_{1}^{(k)}\geq 0\\\substack{  r_{2}^{(1)}\geq \cdots\geq r_{2}^{(k)}\geq 
0}}}
\frac{q^{\frac{1}{3}\sum_{s=1}^{k}r_1^{(s)^2}+ \sum_{s=1}^{k}r_2^{(s)^2}-\sum_{s=1}^{k}r_{1}^{(s)}r_{2}^{(s)}}}
{(q^{\frac{1}{3}};q^{\frac{1}{3}})_{r^{(1)}_{1}-r^{(2)}_{1}}\cdots (q^{\frac{1}{3}};q^{\frac{1}{3}})_{r^{(k)}_{1}}  (q)_{r^{(1)}_{2}-r^{(2)}_{2}}\cdots (q)_{r^{(k)}_{2}}}y^{r^{(1)}_1+\cdots +r^{(k)}_1}_{1}  y^{r^{(1)}_2+\cdots +r^{(k)}_2}_{2}.&
\end{align*}
\end{itemize}
\end{thm}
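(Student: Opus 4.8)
The plan is to read each character formula off the basis $\mathcal{B} = \{bv_L : b \in B\}$ just shown to be a basis of $W^T_{L_k}$. Since each monomial $x^{\hnu}_{r\alpha_i}(m)$ satisfies $\mathrm{wt}(x^{\hnu}_{r\alpha_i}(m)) = -m$, every basis vector $bv_L$ is a weight vector of weight $-\sum_{p,i} m_{p,i}$ and color-type $(r_1,\dots,r_l)$ with $r_i = \sum_p n_{p,i}$, so that
\[
\mathrm{ch}\, W^T_{L_k} = \sum_{b \in B} q^{-\sum_{p,i} m_{p,i}}\, y_1^{r_1}\cdots y_l^{r_l}.
\]
First I would reorganize this sum by fixing a color-type together with a dual-charge-type $\mathcal{R}=(r_i^{(s)})$ (using the ``or equivalently'' reindexing of the union defining $B$), and then summing over all energy sequences $(m_{p,i})$ admissible for $B$. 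Because the $y$-grading depends only on the color-type via $r_i = \sum_{s} r_i^{(s)}$, the outer sum produces exactly the ranges $\sum_{r_i^{(1)}\geq\cdots\geq r_i^{(k)}\geq 0}$ and the monomials $y_i^{\,r_i^{(1)}+\cdots+r_i^{(k)}}$ appearing in the statement. It remains to evaluate, for fixed $\mathcal{R}$, the inner series $\sum q^{-\sum m_{p,i}}$ and match it to the corresponding summand.

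The second step produces the numerator. For fixed $\mathcal{R}$ the energies are bounded above, $m_{p,i}\leq U_{p,i}$, with $U_{p,i}$ the right-hand side of the relevant inequality in $B$; the minimal-weight configuration $m_{p,i}=U_{p,i}$ contributes $q^{-\sum_{p,i}U_{p,i}}$. The key observation is that $-\sum_{p,i}U_{p,i}$ is precisely the quadratic exponent claimed: substituting the bounds from $B$ and invoking the three identities stated immediately before the theorem, namely $\sum_{p,q}\min\{n_{p,i},n_{q,i-1}\}=\sum_s r_{i-1}^{(s)}r_i^{(s)}$ and $\sum_p(\sum_{p>p'>0}2\min\{n_{p,i},n_{p',i}\}+n_{p,i})=\sum_s r_i^{(s)^2}$ together with their $v=2,3$ analogues, converts each charge-type sum of minima into the dual-charge-type quadratic form. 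For instance, in $A_{2l-1}$ the color-$l$ bounds give $\sum_s r_l^{(s)^2}-\sum_s r_{l-1}^{(s)}r_l^{(s)}$ and each color $i\leq l-1$ gives $\tfrac12\sum_s r_i^{(s)^2}-\tfrac12\sum_s r_{i-1}^{(s)}r_i^{(s)}$, which (with $r_0^{(s)}=0$) sum to the stated exponent; the other three types are structurally identical.

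The third step produces the denominator. Relative to the minimal configuration, the remaining freedom in the energies is governed by the ``difference-two'' conditions $m_{p+1,i}\leq m_{p,i}-c\,n_{p,i}$ imposed whenever two adjacent color-$i$ quasi-particles share a charge. Writing each energy as its maximal value decreased by a non-negative amount, the decrements of the color-$i$ quasi-particles of each fixed charge level $t$ form a partition into at most $r_i^{(t)}-r_i^{(t+1)}$ parts (with $r_i^{(k+1)}:=0$), this being the standard Georgiev reindexing. Summing the resulting series yields a factor $1/(q^{\delta_i};q^{\delta_i})_{r_i^{(t)}-r_i^{(t+1)}}$ at each level, where $\delta_i$ is the energy unit of color $i$: $\delta_i=1$ when $\hnu\alpha_i=\alpha_i$, $\delta_i=\tfrac12$ when $\hnu\alpha_i\neq\alpha_i$ and $v=2$, and $\delta_i=\tfrac13$ when $v=3$. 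Taking the product over $t=1,\dots,k$ and over all colors reproduces the denominators of all four formulas.

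The main obstacle is the bookkeeping in the third step: one must verify that after the reindexing the decrements genuinely decouple into independent partitions at each charge level with the correct bound $r_i^{(t)}-r_i^{(t+1)}$, and that the coupling $\min$-terms between distinct charges are entirely absorbed into the minimal energies (the numerator) rather than contaminating the free part. One must also track the energy units $\delta_i$ color-by-color, confirming that the $\nu$-fixed colors (color $l$ for $A_{2l-1}$, colors $1,\dots,l-2$ for $D_l$, colors $3,4$ for $E_6$, and color $2$ for $D_4$) carry the integer unit while the colors arising from nontrivial $\nu$-orbits carry the fractional unit $1/v$, so that each Pochhammer base matches the statement. Once these normalizations are pinned down, assembling the numerator and denominator contributions computed above yields the four character identities.
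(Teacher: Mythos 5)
Your proposal is correct and follows essentially the same route as the paper: the paper also reads the character directly off the basis $\mathcal{B}$, summing over dual-charge-types, with the three rewriting identities stated just before the theorem converting the minimal-energy bounds into the quadratic exponents, and the Georgiev-style counting of the remaining energy decrements (partitions in steps of $1$, $\tfrac{1}{2}$, or $\tfrac{1}{3}$ per color) yielding the Pochhammer denominators. In fact you spell out the bookkeeping (the decoupling at each charge level into at most $r_i^{(t)}-r_i^{(t+1)}$ parts and the color-by-color energy units) more explicitly than the paper, which leaves this standard step implicit after stating the identities.
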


\section*{Acknowledgement}
We are grateful to Mirko Primc for his useful comments and support. The first named author would also like to thank Dra\v zen Adamovi\' c. The first author is partially supported by the Croatian Science Foundation under the project 2634 and by the QuantiXLie Centre of Excellence, a project cofinanced by the Croatian Government and European Union through the European Regional Development Fund - the Competitiveness and Cohesion Operational Programme  (Grant KK.01.1.1.01.0004).

\vspace{.3in}

\vspace{.2in}

\noindent{\small \sc Department of Mathematics, University of Rijeka, Radmile Matej\v{c}i\'{c} 2, 51 000 Rijeka, Croatia
} \\ {\em E--mail address}: mbutorac@math.uniri.hr

\vspace{.2in}
\noindent{\small \sc Department of Mathematics and Computer Science, Ursinus College, 
Collegeville, PA 19426} \\ {\em E--mail address}: csadowski@ursinus.edu

\end{document}